\begin{document}

\title{Analysis and optimal control of an intracellular 
delayed HIV model\\ with CTL immune response}

\titlerunning{Analysis and optimal control of an intracellular delayed HIV model}

% --------------------------------------

\author{Karam Allali \and Sanaa Harroudi \and Delfim F. M. Torres}

\authorrunning{K. Allali \and S. Harroudi \and D. F. M. Torres}

\institute{K. Allali \and S. Harroudi \at
Laboratory of Mathematics and Applications,
Faculty of Sciences and Technologies,\\
University Hassan II of Casablanca,
P.O. Box 146, Mohammedia, Morocco\\
\email{allali@hotmail.com}
\and
S. Harroudi\at
\email{sanaa.harroudi@gmail.com}
\and
D. F. M. Torres (\Letter) \at
Center for Research and Development in Mathematics and Applications (CIDMA),\\
Department of Mathematics, University of Aveiro, 3810-193 Aveiro, Portugal\\
\email{delfim@ua.pt}
}

% --------------------------------------

\date{Received: 30 May 2017 / Revised: 22 January 2018 / Accepted: 30 January 2018}

\maketitle

% --------------------------------------

\begin{abstract}
A delayed model describing the dynamics of HIV 
(Human Immunodeficiency Virus) with CTL
(Cytotoxic T Lymphocytes) immune response is investigated. 
The model includes four nonlinear differential equations 
describing the evolution of uninfected, infected, free HIV viruses, 
and CTL immune response cells. It includes also
intracellular delay and two treatments (two controls). 
While the aim of first treatment consists to block 
the viral proliferation, the role of the second 
is to prevent new infections. Firstly, we prove the
well-posedness of the problem by establishing some positivity and
boundedness results. Next, we give some conditions that insure the
local asymptotic stability of the endemic and disease-free equilibria.
Finally, an optimal control problem, associated with the intracellular
delayed HIV model with CTL immune response, is posed and investigated.
The problem is shown to have an unique solution, which is characterized
via Pontryagin's minimum principle for problems with delays.
Numerical simulations are performed, confirming stability of the
disease-free and endemic equilibria and illustrating the effectiveness of
the two incorporated treatments via optimal control.

\keywords{HIV modeling \and Treatment \and Intracellular time delay \and Stability \and Optimal control}

\subclass{34C60 \and 49K15 \and 92D30}
\end{abstract}

% --------------------------------------

\section{Introduction}

Human immunodeficiency virus (HIV) is recognized as a viral
pathogen causing the well known acquired immunodeficiency syndrome
(AIDS), which is considered the end-stage of the infection.
After this stage, the immune system fails to play its principal
role, which is to protect the whole body against harmful intruders. 
This failure is due to destruction of the vast majority of CD4+ T 
cells by the HIV virus, reducing them to an account below $200$ 
cells per $\mu l$ \cite{2,1}. 

During last decades, many mathematical models have been developed in order
to better understand the dynamics of the HIV disease \cite{5a,3,4,5b}. 
Mathematical models of HIV and tuberculosis
coinfection have been investigated 
in \cite{MyID:300,MyID:318}. An interesting
case study, with real data from Cape Verde islands,
has been carried out in \cite{MyID:359},
showing that the goal of the United Nations 
to end the AIDS epidemic by 2030 is a nontrivial task. 
For the importance of optimization techniques
and optimal control in the study of HIV,
we refer the reader to \cite{MyID:366,MyID:367} 
and references therein. Here we observe that, often, 
models introduce the effect of cellular immune response, 
also called the cytotoxic T-lymphocyte (CTL) response, 
which attacks and kills the infected cells \cite{rst}. 
It has been shown that this cellular 
immune response can control the load of HIV viruses
\cite{perel2,perel1}. In \cite{crs}, it is assumed that
CTL proliferation depends, besides infected cells, as usual, 
also on healthy cells. Moreover, an optimal control problem associated 
with the suggested model is studied \cite{crs}. Recently, the same problem 
was tackled by introducing time delays \cite{rst}. Here, we
continue the investigation of such kind of problems by introducing 
the HIV virus dynamics to the system of equations. This is important 
because uninfected cells must be in contact with the HIV virus 
before they become infected. The proposed basic model, illustrating 
this type of scenario, is as follows:
\begin{equation}
\label{sy}
\begin{cases}
\displaystyle \frac{dx(t)}{dt} = \lambda -d x(t)- \beta x(t) v(t),\\[0.3cm]
\displaystyle \frac{dy(t)}{dt} = \beta x(t) v(t) - a y(t) -p y(t) z(t),\\[0.3cm]
\displaystyle \frac{dv(t)}{dt} = a N y(t) - \mu v(t), \\[0.3cm]
\displaystyle \frac{dz(t)}{dt} = c x(t) y(t) z(t) - h z(t),
\end{cases}
\end{equation}
subject to given initial conditions $x(0) = x_{0}$, $y(0) = y_{0}$,
$v(0) = v_{0}$, and $z(0)= z_{0}$. In this model, $x(t)$, $y(t)$, $v(t)$ 
and $z(t)$ denote, respectively, the concentrations  at time $t$ 
of uninfected cells, infected cells, HIV virus, and CTL cells. 
The healthy CD$4^{+}$ cells grow at a rate $\lambda$,
decay at a rate $d x(t)$ and become infected by the virus 
at a rate $\beta x(t) v(t)$. Infected cells $(y)$ 
die at a rate $a$ and are killed by the CTL
response at a rate $p$. Free virus $(v)$ is produced by the
infected cells at a rate $a N$ and decay at a rate $\mu$, where $N$
is the number of free virus produced by each actively infected
cell during its life time. Finally, CTLs $(z)$ expand in response
to viral antigen derived from infected cells at a rate $c$ and
decay in the absence of antigenic stimulation at a rate $h$.

The paper is organized as follows. Section~\ref{sec:2} is devoted 
to the proof of existence, positivity and boundedness of solutions. 
Then, in Section~\ref{sec:3}, we do an optimization analysis 
of the viral infection model. In Section~\ref{sec:4}, we construct 
an appropriate numerical algorithm and give some simulations. 
Finally, conclusions are given in Section~\ref{sec:5}.

% --------------------------------------

\section{Analysis of the model with delay}
\label{sec:2}

In order to be realistic, let us introduce an 
intracellular time delay to the system 
of equations \eqref{sy}. Then, the model 
takes the following form:
\begin{equation}
\label{sys}
\begin{cases}
\displaystyle \dot{x}(t) = \lambda -d x(t)- \beta x(t) v(t), \\
\displaystyle \dot{y}(t) = \beta x(t-\tau) v(t-\tau)- a y(t)- p y(t) z(t),\\
\displaystyle \dot{v}(t) = a N y(t)- \mu v(t), \\
\displaystyle \dot{z}(t) = c x(t) y(t) z(t) - h z(t).
\end{cases}
\end{equation}
Here, the delay $\tau$ represents the time needed
for infected cells to produce virions after viral entry. 
Model \eqref{sys} is a system of delayed ordinary
differential equations. For such kind of problems, initial
functions need to be addressed and an appropriate functional
framework needs to be specified. Let us first consider 
$X = C([-\tau,0];\mathbb{R}^4)$ to be the Banach space of continuous
mappings from $[-\tau,0]$ to $\mathbb{R}^4$ equipped with the sup-norm 
$\displaystyle \| \varphi \| = \sup_{-\tau \le t \le 0} |\varphi(t)|$. 
We assume that the initial functions verify 
\begin{equation}
\label{2}
\left(x(\theta), y(\theta), v(\theta), z(\theta)\right)\in X.
\end{equation}
Also, from biological reasons, these initial functions $x(\theta)$,
$y(\theta)$, $v(\theta)$ and $z(\theta)$ have to be nonnegative:
\begin{equation}
\label{3}
x(\theta) \ge 0, \quad y(\theta) \ge 0, \quad v(\theta) \ge 0, 
\quad z(\theta) \ge 0, \quad \text{ for } \theta \in [-\tau, 0].
\end{equation}

% ------------------

\subsection{Positivity and boundedness of solutions}
\label{sec:2:1}

For the solutions of \eqref{sys} with initial functions satisfying
conditions \eqref{2} and \eqref{3}, the following theorem holds.

\begin{theorem}
\label{thm1}
For any initial conditions $\left(x(t), y(t), v(t), z(t)\right)$ 
satisfying \eqref{2} and \eqref{3}, the system \eqref{sys} 
has a unique solution. In addition, the solution is nonnegative 
and bounded for all $t \ge 0$.
\end{theorem}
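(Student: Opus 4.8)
The plan for proving Theorem~\ref{thm1} naturally splits into three parts: existence and uniqueness, nonnegativity, and boundedness. For \textbf{existence and uniqueness}, I would first observe that the right-hand side of \eqref{sys} is composed of polynomial (hence locally Lipschitz) functions of the present and delayed state variables. By the standard theory of delayed differential equations (the method of steps, or equivalently the Banach fixed-point theorem applied on the space $X = C([-\tau,0];\mathbb{R}^4)$), a unique local solution exists on some maximal interval $[0, T_{\max})$. The argument proceeds step-by-step: on $[0,\tau]$ the delayed terms $x(t-\tau)v(t-\tau)$ are known functions supplied by the initial data \eqref{2}, so the system reduces to an ordinary ODE to which Picard--Lindel\"of applies; one then iterates on $[\tau, 2\tau]$, and so on. Global existence on $[0,\infty)$ then follows once the a priori boundedness estimate is in hand.

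For \textbf{nonnegativity}, I would verify that each coordinate axis is invariant by checking that the vector field points inward (or is tangent) on the boundary of the positive orthant. Concretely, from the first equation one has $\dot{x}(t)\big|_{x=0} = \lambda \ge 0$, so $x$ cannot cross zero from above; equivalently, writing $\dot{x} = \lambda - (d+\beta v)x$ and using an integrating-factor representation shows $x(t) > 0$ for $t \ge 0$. For $y$, $v$, and $z$ the equations are linear in the variable being tested when that variable is set to zero: for instance $\dot{z} = (cxy - h)z$ integrates to an exponential factor times $z_0 \ge 0$, forcing $z(t) \ge 0$, and similarly $\dot{v}\big|_{v=0} = aNy \ge 0$ keeps $v$ nonnegative once $y \ge 0$. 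The delayed term $\beta x(t-\tau)v(t-\tau)$ in the $y$-equation is nonnegative by the already-established positivity of $x$ and $v$ on $[-\tau, t]$, so $\dot{y}\big|_{y=0} \ge 0$. One must be slightly careful to run these inequalities in the right order and to handle the delay consistently via the method of steps.

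For \textbf{boundedness}, the key idea is to find a suitable linear combination of the state variables whose total derivative is bounded above by an affine expression of the form $C_1 - C_2 (\text{combination})$, yielding a differential inequality of Gronwall type. I would introduce an auxiliary function such as $T(t) = x(t-\tau) + y(t) + \frac{p}{c}\,z(t)$ (or a similar weighted sum), differentiate along trajectories, and cancel the nonlinear interaction terms; choosing a common decay rate $m = \min\{d, a, h\}$ then gives $\dot{T}(t) \le \lambda - m\,T(t)$, which implies $T$ is bounded, hence $x$, $y$, and $z$ are bounded. Boundedness of $v$ follows afterward from its linear equation $\dot{v} = aNy - \mu v$ once $y$ is known to be bounded, again by a Gronwall estimate.

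The main obstacle I anticipate is \textbf{constructing the right Lyapunov-type combination and managing the delay in the boundedness step}: because the incidence term $\beta x(t-\tau)v(t-\tau)$ feeds the $y$-equation at the delayed time while the $x$-equation loses the same quantity at the present time, the nonlinear terms do not cancel pointwise but only after shifting the argument of $x$ by $\tau$. Getting this bookkeeping right, so that the troublesome $\beta x v$ and $p y z$ terms cancel and leave a clean dissipative inequality, is the delicate part; the positivity and local existence arguments are otherwise routine.
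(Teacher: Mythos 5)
Your existence--uniqueness argument (method of steps plus Picard--Lindel\"of) and your nonnegativity argument (integrating-factor representations, i.e.\ invariance of the positive orthant) are sound and essentially the same as the paper's, which cites the standard functional-differential-equations framework and writes out exactly those integral representations. The genuine gap is in the boundedness step. Your candidate function $T(t) = x(t-\tau) + y(t) + \frac{p}{c}z(t)$ gets the delay bookkeeping right (the $\beta x(t-\tau)v(t-\tau)$ terms do cancel), but the $pyz$ terms do \emph{not} cancel: in model \eqref{sys} the CTL proliferation term is $c\,x(t)y(t)z(t)$ --- trilinear, proportional also to the healthy-cell concentration, which is the distinguishing feature of this model --- and not $c\,y(t)z(t)$. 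Differentiating your $T$ gives
\[
\dot{T}(t) = \lambda - d\,x(t-\tau) - a\,y(t)
+ p\,y(t)z(t)\bigl(x(t)-1\bigr) - \frac{ph}{c}\,z(t),
\]
and the leftover cubic term $p\,y(t)z(t)\bigl(x(t)-1\bigr)$ is positive whenever $x(t)>1$, which is the typical situation ($x$ tends toward $\lambda/d$, equal to $10$ for the paper's parameter values). Hence no inequality of the form $\dot{T} \le \lambda - mT$ follows and the Gronwall step collapses. No choice of weight on $z$ can repair this, because the ratio of the gain term $cxyz$ to the loss term $pyz$ carries the unbounded-in-principle factor $x$.

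This is precisely why the paper keeps $z$ out of the dissipative combination. It takes $F(t) = aN x(t) + aN y(t+\tau) + \frac{a}{2}v(t+\tau)$ (your backward shift of $x$ versus the paper's forward shift of $y$ and $v$ is an equivalent choice), simply \emph{discards} the term $-aNp\,y(t+\tau)z(t+\tau)$ using the already-established nonnegativity, and obtains $\dot{F} \le \lambda a N - \varrho F$ with $\varrho = \min\left(d, \frac{a}{2}, \mu\right)$, which bounds $x$, $y$, $v$ (bounding $v$ afterwards from its linear equation, as you propose, would work equally well). Boundedness of $z$ then requires a separate, second-stage argument: the paper writes $\dot{z} + hz = cxyz$, substitutes $cxyz = \frac{c}{p}x\bigl(\beta x(t-\tau)v(t-\tau) - ay - \dot{y}\bigr)$ from the second equation of \eqref{sys}, integrates to get an expression for $z(t)$, and controls the term $\int_0^t x(s)\dot{y}(s)e^{h(s-t)}\,ds$ by integration by parts (legitimate because $\dot{x}$ is bounded once $x$ and $v$ are). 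Your proposal is missing this second stage --- or any other device to control $z$ --- so as written the boundedness claim, and with it global existence, does not go through.
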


\begin{proof}
By the standard functional framework of ordinary
differential equations (see, for instance, \cite{hal} and
references therein), we know that there is a unique local 
solution $(x(t), y(t), v(t), z(t))$ to system \eqref{sys} 
in $[0,t_m)$. From system \eqref{sys}, we have the following:
$$
\displaystyle x(t) = e^{-\displaystyle \int_0^t (d+\beta v(\xi))d\xi}\left(x(0)
+\int_0^t \lambda e^{\displaystyle \int_0^\eta (d+\beta v(\xi))d\xi}
d\eta\right),
$$
$$
\displaystyle y(t) = e^{-\displaystyle \int_0^t (a+pz(\xi))d\xi}\left( y(0)+
\int_0^t \beta v(\eta-\tau)x(\eta-\tau) e^{\displaystyle \int_0^\eta
(a+pz(\xi))d\xi} d\eta \right),
$$
$$
\displaystyle v(t) = e^{-\mu t}\left( v(0)+ \int_0^t aN y(\eta)
e^{\mu \eta} d\eta\right),
$$
and
$$
\displaystyle z(t) = z(0)e^{\displaystyle \int_0^t (cy(\xi)-h)d\xi}.
$$
This shows the positivity of solutions in $t \in [0,t_m)$.
Next, for the boundedness of the solutions, we consider 
the following function:
$$
F(t)= aN x(t) + aN y(t+\tau) + \frac{a}{2} v(t+\tau).
$$
This leads to
\begin{align*}
\frac{dF(t)}{dt} &= aN \left(\lambda -d
x(t)-\beta v(t)x(t)\right)\\
& \quad + aN \left( \beta v(t)x(t) - ay(t+\tau) -p
y(t+\tau)z(t+\tau)\right)\\
& \quad + \frac{a}{2}\left( aN y(t+\tau) -\mu v(t+\tau)  \right),
\end{align*}
from which we have
$$
\frac{dF(t)}{dt} \le \lambda aN  - aN d  x(t) - \frac{a^{2}N}{2}
y(t+\tau)- \frac{a \mu}{2} v(t+\tau).
$$
If we set $\displaystyle \varrho = \min\left(d,\frac{a}{2},\mu\right)$, 
then we have
$$
\frac{dF(t)}{dt} \le \lambda aN - \varrho F(t).
$$
This proves, via Gronwall's lemma, that $F(t)$ is bounded, 
and so are the functions $x(t)$, $y(t)$ and $v(t)$.
Now, we prove the boundedness of $z(t)$. From the last
equation of \eqref{sys}, we have
$$
\dot{z}(t) + hz(t) = cx(t)y(t)z(t).
$$
Moreover, from the second equation of \eqref{sys}, it follows that
$$
\dot{z}(t) + hz(t) = \frac{c}{p}x(t)\left(\beta
x(t-\tau)v(t-\tau)- ay(t)- \dot{y}(t) \right).
$$
Thus, by integrating over time, we have
$$
z(t) = z(0) e^{-ht} + \int_{0}^t \frac{c}{p}x(s)\left(  \beta
x(s-\tau)v(s-\tau)- ay(s) - \dot{y}(s)\right) e^{h(s-t)}ds.
$$
From the boundedness of $x$, $y$ and $v$, and by using integration
by parts, it follows the boundedness of $z(t)$. Therefore, every 
local solution can be prolonged up to any time $t_m>0$, 
which means that the solution exists globally.
\end{proof}

% --------------------------------------

\subsection{The linearized problem around the steady-state solution}
\label{sec:2:2}

It is straightforward to establish that model
\eqref{sys} has one disease-free equilibrium given by
$$
E_{f}=\left(\frac{\lambda}{d},0,0,0\right)
$$
and two endemic equilibrium points given as follows:
$$
E_{1} = \left(\frac{ \mu}{N \beta},\frac{ \lambda \beta N-d \mu}{aN \beta},
\frac{\lambda \beta N-d \mu}{\mu \beta},0\right)
$$
and
$$
E_{2} = \left(\frac{\lambda \mu c- \beta aN h}{d \mu c}, 
\frac{dh \mu}{\lambda \mu c- \beta aN h}, \frac{dh aN}{\lambda \mu c
- \beta aN h},\frac{\beta aN}{\mu p}\left(\frac{\lambda \mu c
- \beta aN h}{d \mu c}\right)-\frac{a}{p}\right).
$$
Consider now the following transformation:
$$
X(t)= x(t)- \bar{x},
\quad Y(t)= y(t)- \bar{y}, 
\quad V(t)= v(t)-  \bar{v}, 
\quad Z(t)= z(t)- \bar{z},
$$
where $\left(\bar{x},\bar{y},\bar{v},\bar{z}\right)$ denotes 
any equilibrium point $E_{f}$, $E_{1}$ or $E_{2}$. The linearized 
system of the previous model \eqref{sys} is of form
\begin{equation}
\label{sys1}
\begin{cases}
\displaystyle \dot{X}(t) = \left(-d- \beta \bar{v}\right)X(t) 
- \beta \bar{x} V(t), \\
\displaystyle \dot{Y}(t) = \left(-a-p\bar{z}\right)Y(t)
+ \beta \bar{v} X(t- \tau )+ \beta \bar{x} V(t- \tau) -p \bar{y} Z(t),\\
\displaystyle \dot{V}(t) = aN Y(t)- \mu V(t) , \\
\displaystyle \dot{Z}(t) = c \bar{y} \bar{z} X(t) + c\bar{x} \bar{z} Y(t) 
+ (c \bar{x} \bar{y} -h) Z(t).
\end{cases}
\end{equation}
System \eqref{sys1} can be written in matrix form as follows:
\begin{equation*}
\frac{d}{dt} 
\left(
\begin{array}{c}
X(t) \\
Y(t) \\
V(t) \\
Z(t)
\end{array}
\right)
= A_{1} 
\left(
\begin{array}{c}
X(t) \\
Y(t) \\
V(t) \\
Z(t)
\end{array}
\right) 
+A_{2} 
\left(
\begin{array}{c}
X(t- \tau) \\
Y(t- \tau) \\
V(t- \tau) \\
Z(t- \tau) 
\end{array}
\right),
\end{equation*}
where $A_{1}$ and $A_{2}$ are the two matrices given by
\begin{equation*}
A_{1}= 
\left(
\begin{array}{cccc}
-d- \beta \bar{v} & 0 & - \beta \bar{x} & 0 \\
0 & -a-p \bar{z} & 0 & -p \bar{y} \\
0 & aN & -\mu & 0 \\
c \bar{y} \bar{z} & c \bar{x} \bar{z} & 0 & c \bar{x} \bar{y} -h \\
\end{array}
\right)
\end{equation*}
and
\begin{equation*}
A_{2} = 
\left(
\begin{array}{cccc}
0 & 0 & 0 & 0 \\
\beta \bar{v} & 0 & \beta \bar{x} & 0 \\
0 & 0 & 0 & 0 \\
0 & 0 & 0 & 0 \\
\end{array}
\right).
\end{equation*}

% --------------------------------------

\subsection{Stability of the disease-free equilibrium}
\label{sec:2:3}

We begin by studying the stability of the disease-free 
equilibrium $E_f$. The following result holds.

\begin{theorem}
\label{x}
The local stability of the disease-free equilibrium $E_f$ depends
on the value of $N \beta \lambda - d  \mu$. Precisely, we have:
\begin{enumerate}
\item if $N \beta \lambda - d  \mu < 0 $, then the disease-free
equilibrium $E_{f}$ is locally asymptotically stable 
for any time delay $\tau \geq 0$;

\item if $N \beta \lambda - d  \mu > 0$, then the equilibrium $E_{f}$ 
is unstable for any time delay $\tau \geq 0$.
\end{enumerate}
\end{theorem}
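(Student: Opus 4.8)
The plan is to decide stability from the location of the roots of the characteristic equation of the linearized system \eqref{sys1} evaluated at $E_f$. Substituting $\bar{x}=\lambda/d$ and $\bar{y}=\bar{v}=\bar{z}=0$ into the matrices $A_{1}$ and $A_{2}$, I would form the characteristic equation $\det\!\left(\rho I - A_{1} - A_{2}\,e^{-\rho\tau}\right)=0$. At this equilibrium the first and fourth columns of $\rho I - A_{1} - A_{2}\,e^{-\rho\tau}$ each contain a single nonzero entry, namely $\rho+d$ and $\rho+h$, so cofactor expansion along these columns factors the characteristic equation as
\begin{equation*}
(\rho+d)(\rho+h)\left[(\rho+a)(\rho+\mu)-\frac{aN\beta\lambda}{d}\,e^{-\rho\tau}\right]=0.
\end{equation*}
This yields two manifestly negative real roots, $\rho=-d$ and $\rho=-h$, so the entire question reduces to the transcendental factor
\begin{equation*}
(\rho+a)(\rho+\mu)=\frac{aN\beta\lambda}{d}\,e^{-\rho\tau}.
\end{equation*}

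For part (1), under the hypothesis $N\beta\lambda-d\mu<0$, I would argue by contradiction that this transcendental equation has no root with $\operatorname{Re}\rho\ge 0$. If such a root existed, then since $\tau\ge 0$ we have $|e^{-\rho\tau}|=e^{-\tau\operatorname{Re}\rho}\le 1$, which gives $|(\rho+a)(\rho+\mu)|\le aN\beta\lambda/d$. On the other hand, $\operatorname{Re}\rho\ge 0$ together with $a,\mu>0$ forces $|\rho+a|\ge a$ and $|\rho+\mu|\ge\mu$, hence $|(\rho+a)(\rho+\mu)|\ge a\mu$. Combining the two estimates yields $a\mu\le aN\beta\lambda/d$, i.e. $N\beta\lambda-d\mu\ge 0$, contradicting the assumption. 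Thus every characteristic root has negative real part and $E_f$ is locally asymptotically stable, uniformly in $\tau\ge 0$.

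For part (2), under $N\beta\lambda-d\mu>0$, I would exhibit a real positive root by the intermediate value theorem. Setting $g(\rho)=(\rho+a)(\rho+\mu)-\frac{aN\beta\lambda}{d}\,e^{-\rho\tau}$ for real $\rho$, one computes $g(0)=a\mu-aN\beta\lambda/d=\frac{a}{d}\left(d\mu-N\beta\lambda\right)<0$, while as $\rho\to+\infty$ the quadratic term dominates and the exponential stays bounded, so $g(\rho)\to+\infty$. By continuity there is a root $\rho^{*}>0$, and the existence of a characteristic root with positive real part renders $E_f$ unstable for every $\tau\ge 0$.

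The main obstacle I anticipate is establishing stability \emph{uniformly} over all $\tau\ge 0$ in part (1): one must preclude the possibility that, as $\tau$ varies, a conjugate pair of roots crosses the imaginary axis. The modulus estimate above is precisely what sidesteps a delicate root-crossing (Hopf) analysis, because it bounds $|(\rho+a)(\rho+\mu)|$ independently of $\tau$ whenever $\operatorname{Re}\rho\ge 0$. The only points requiring care are the elementary bounds $|\rho+a|\ge a$ and $|\rho+\mu|\ge\mu$ for $\operatorname{Re}\rho\ge 0$, which follow from $|\zeta|\ge|\operatorname{Re}\zeta|$ and the positivity of $a$ and $\mu$.
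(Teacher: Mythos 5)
Your proof is correct, and for part (1) it takes a genuinely different route from the paper. Your factored characteristic equation $(\rho+d)(\rho+h)\left[(\rho+a)(\rho+\mu)-\tfrac{aN\beta\lambda}{d}e^{-\rho\tau}\right]=0$ is exactly the paper's equation \eqref{eq2} in disguise, and your part (2) coincides with the paper's intermediate-value argument --- indeed your sign computation $g(0)=\tfrac{a}{d}(d\mu-N\beta\lambda)<0$ is the correct one, whereas the paper misstates it as $f(0)>0$ before concluding the existence of a positive root. The real divergence is in part (1): the paper proceeds in two stages, first computing the four roots explicitly at $\tau=0$, then for $\tau>0$ substituting $\zeta=i\omega$, separating real and imaginary parts, summing squares to get a quadratic in $\omega^{2}$ with no positive root, and finally invoking Rouch\'e's theorem (i.e.\ continuity of the root locus in $\tau$) to conclude that no root can ever enter the right half-plane. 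You instead give a single modulus estimate valid on the whole closed right half-plane: if $\operatorname{Re}\rho\ge 0$ then $|e^{-\rho\tau}|\le 1$ while $|\rho+a|\,|\rho+\mu|\ge a\mu$, forcing $N\beta\lambda\ge d\mu$, a contradiction. This treats $\tau=0$ and $\tau>0$ uniformly, dispenses entirely with the continuity-of-roots/Rouch\'e machinery (which the paper only sketches), and is arguably tighter; what it gives up is the crossing-frequency information that the paper's $\omega^{2}$-polynomial would supply in situations where stability genuinely depends on $\tau$ (Hopf-type analysis), information that is superfluous here since the result is delay-independent. Both arguments rest on the same standard facts for retarded delay equations: absence of characteristic roots with $\operatorname{Re}\rho\ge 0$ yields local asymptotic stability, and a real positive root yields instability.
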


\begin{proof}
The characteristic equation of system \eqref{sys} is given by
\begin{equation}
\label{eq1}
\Delta ( \zeta ) = \det( \zeta Id - A_{1} - e^{- \zeta \tau} A_{2})=0.
\end{equation}
Thus, at the disease-free equilibrium, 
the characteristic equation takes the form
\begin{equation}
\label{eq2}
(\zeta +d)( \zeta +h)\left[\zeta ^{2} +( \mu + a) \zeta 
+a \mu\left(1-\frac{N \beta \lambda }{d \mu} e^{- \zeta \tau}\right)\right]=0.
\end{equation}
If we assume $\tau = 0$, then equation \eqref{eq2} becomes
\begin{equation}
\label{eq3}
(\zeta +d)(\zeta +h)\left[\zeta ^{2} +(\mu + a) \zeta 
+a \mu\left(1-\frac{N \beta \lambda }{d \mu}\right)\right]=0.
\end{equation}
The four roots of \eqref{eq3} are: 
\begin{equation*}
\begin{split}
\zeta_{1}&=-d,\\
\zeta_{2}&= -h,\\
\zeta_{3}&=\frac{-( \mu +a) 
- \sqrt{( \mu+a)^{2}-4a \mu \left(1-\frac{N \beta \lambda}{d \mu}\right)}}{2},\\
\zeta_{4}&= \frac{-( \mu +a) + \sqrt{( \mu +a)^{2}-4a \mu
(1-\frac{N \beta \lambda}{d \mu})}}{2}. 
\end{split}
\end{equation*}
It is clear that $\zeta_{1}$, $\zeta_{2}$ and $\zeta_{3}$ have negative real parts,
while $\zeta_{4}$ has negative real part if $N \beta \lambda - d
\mu < 0$. Suppose that $\tau>0$. To prove the stability of $E_{f}$, 
we use Rouch\'e's theorem. For that, we need to prove that the roots
of the characteristic equation \eqref{eq2} cannot have pure
imaginary roots, that is, cannot cross the imaginary axis. Suppose
the contrary. Let $\zeta = \omega i$ with $\omega >0$ a purely
imaginary root of \eqref{eq2}. Then, $(\omega i) ^{2} +(\mu + a) 
(\omega i) +a \mu\left(1- \frac{N \beta \lambda }{d \mu} e^{-i\omega\tau}\right)=0$,
that is, $- \omega ^{2} +( \mu + a) \omega i 
+a \mu\left(1- \frac{N \beta \lambda }{d \mu} e^{-i \omega \tau}\right)=0$.
By using Euler's formula $e^{-i \omega \tau}= \cos(\omega \tau) 
-i \sin(\omega \tau)$ and by separating the real imaginary parts, 
we have
\begin{equation*}
\begin{cases}
\displaystyle -\omega^{2}+ a \mu = \frac{aN \beta \lambda}{d} \cos(\omega \tau)\\[0.3cm]
\displaystyle (a+ \mu) \omega = - \frac{aN \beta \lambda}{d} \sin(\omega \tau).
\end{cases}
\end{equation*}
Adding the squares in the two equations, one obtains
$$
\omega^{4} +\left(a^{2}+ \mu^{2}\right) \omega ^{2}+ a^{2} 
\mu ^{2}\left(1-\left(\frac{N \beta \lambda}{d \mu}\right)^{2}\right)=0.
$$
Let $X= \omega^{2}$. We have
$$
X^{2} + \left(a^{2}+ \mu^{2}\right) X
+ a^{2} \mu^{2}\left(1 - \left(\frac{N \beta\lambda}{d \mu}\right)^{2}\right)=0,
$$
which has no positive solution when $N \beta \lambda - d \mu < 0$.
Therefore, there is no root $\zeta= i \omega$ with $\omega \geq 0$ 
for \eqref{eq2}, implying that the root of \eqref{eq2} cannot
intersect the pure imaginary axis. Therefore, all roots of
\eqref{eq2} have negative real parts $N \beta \lambda - d \mu 
< 0$ and the disease-free equilibrium, $E_{f}$, is locally
asymptotically stable if $N \beta \lambda - d \mu < 0$.
In addition, it is easy to show that \eqref{eq2} has a real positive
root when $N \beta \lambda - d \mu > 0$. Indeed, let us put
$$
f(\zeta)= \zeta ^{2} +\left(\mu + a\right) \zeta 
+a \mu\left(1-\frac{N \beta \lambda }{d \mu} e^{- \zeta \tau}\right).
$$ 
Then, $f(0)=a\mu\left(1- \frac{N \beta \lambda }{d \mu}\right)>0$ 
and $\lim_{\zeta\rightarrow +\infty} f(\zeta)= +\infty$. 
Consequently, $f$ has a positive real root and the disease-free 
equilibrium is unstable.
\end{proof}

% --------------------------------------

\subsection{Stability of the endemic equilibria}
\label{sec:2:4}

We start by studying the local stability of the
infected-equilibrium $E_{1}$ for any time delay $\tau$.

\begin{theorem}
The local stability of the disease-free equilibrium $E_1$ depends
on the value of 
$$
\beta N(\mu c \lambda - \beta h aN)-\mu^{2} cd.
$$
Precisely, we have:
\begin{enumerate}
\item if $\beta N(\mu c \lambda - \beta h aN)-\mu^{2} cd <0$,
then $E_{1}$ is locally asymptotically stable for any positive 
time delay $\tau$;

\item if $\beta N(\mu c \lambda - \beta h aN)-\mu^{2} cd > 0$,
then $E_{1}$ is unstable for any positive time delay $\tau$.
\end{enumerate}
\end{theorem}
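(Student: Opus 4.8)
The plan is to mirror the argument already used for $E_f$: evaluate the characteristic equation \eqref{eq1} at $E_1$, exploit the structure of the linearization to factor $\Delta(\zeta)$, and then locate the roots separately for $\tau=0$ and for $\tau>0$. Writing $(\bar x,\bar y,\bar v,\bar z)=E_1$, the decisive feature is that $\bar z=0$, so the entries $c\bar y\bar z$ and $c\bar x\bar z$ of $A_1$ vanish and the last row of $\zeta\,\mathrm{Id}-A_1-e^{-\zeta\tau}A_2$ has a single nonzero entry. Expanding the determinant along that row, and recording beforehand the two identities $d+\beta\bar v=\lambda\beta N/\mu$ and $aN\beta\bar x=a\mu$ that the coordinates of $E_1$ satisfy, one obtains the factorization
\begin{equation*}
\Delta(\zeta)=\left(\zeta-(c\bar x\bar y-h)\right)\left[(\zeta+d+\beta\bar v)(\zeta+a)(\zeta+\mu)-a\mu(\zeta+d)\,e^{-\zeta\tau}\right].
\end{equation*}

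The linear factor contributes the single real eigenvalue $\zeta_\ast=c\bar x\bar y-h$, and inserting the coordinates of $E_1$ gives
\begin{equation*}
\zeta_\ast=\frac{\beta N(\mu c\lambda-\beta h aN)-\mu^2 cd}{aN^2\beta^2}.
\end{equation*}
Since the denominator is positive, $\zeta_\ast$ has the same sign as the threshold quantity. This settles item~(2) at once: when the threshold is positive, $\zeta_\ast>0$ is a root of $\Delta$ for every $\tau\ge0$, so $E_1$ is unstable. It also reduces item~(1) to showing that, whenever $\zeta_\ast<0$, every root of the bracketed factor lies in the open left half-plane for all $\tau\ge0$. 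A useful observation here is that this factor involves only $\lambda,\beta,N,d,a,\mu$, not $c,h,p$, so its root distribution is controlled solely by the existence condition $\lambda\beta N-d\mu>0$ for $E_1$, equivalently $d+\beta\bar v>d$.

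For $\tau=0$ the bracket expands to the cubic $\zeta^3+(d+\beta\bar v+a+\mu)\zeta^2+(d+\beta\bar v)(a+\mu)\zeta+a\mu\beta\bar v$, whose coefficients are all positive; the single remaining Routh--Hurwitz condition $b_1b_2>b_3$ is then checked directly and holds comfortably, since $a\mu\beta\bar v$ is dominated by the product of the two larger coefficients. Hence all three roots start in the left half-plane at $\tau=0$. For $\tau>0$ I would argue as in Theorem~\ref{x} through Rouch\'e's theorem, ruling out a purely imaginary crossing. Setting $\zeta=i\omega$ and equating the moduli of the two sides of $(\zeta+d+\beta\bar v)(\zeta+a)(\zeta+\mu)=a\mu(\zeta+d)e^{-\zeta\tau}$ yields, with $X=\omega^2$,
\begin{equation*}
(X+(d+\beta\bar v)^2)(X+a^2)(X+\mu^2)=a^2\mu^2(X+d^2).
\end{equation*}
The difference of the two sides is a cubic in $X$ with leading coefficient $1$, quadratic coefficient $(d+\beta\bar v)^2+a^2+\mu^2$, linear coefficient $(d+\beta\bar v)^2(a^2+\mu^2)$, and constant term $a^2\mu^2\left((d+\beta\bar v)^2-d^2\right)$; every one of these is strictly positive precisely because $d+\beta\bar v>d$. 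Thus the polynomial has no positive root, no purely imaginary $\zeta$ solves the bracket, and by continuity in $\tau$ the roots cannot cross into the right half-plane. Combined with $\zeta_\ast<0$, this gives the local asymptotic stability asserted in item~(1).

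The routine parts are the two determinant and modulus computations; the step I expect to require the most care is the factorization itself, namely verifying that the off-diagonal coupling through $-p\bar y$ and the delayed entries collapse, via the identities at $E_1$, into the clean bracket above, so that the threshold quantity is seen to act only through the decoupled eigenvalue $\zeta_\ast$. Once that structure is in place, both the $\tau=0$ Routh--Hurwitz check and the $\tau>0$ no-crossing argument reduce to confirming positivity of explicit coefficients under the single hypothesis $\lambda\beta N-d\mu>0$.
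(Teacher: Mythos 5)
Your proposal is correct and follows the same skeleton as the paper's proof: use $\bar z=0$ at $E_1$ to factor the characteristic equation \eqref{eq1} into the linear factor $\zeta-(c\bar x\bar y-h)$ times a cubic quasi-polynomial, identify the sign of the decoupled real root $\zeta_\ast=c\bar x\bar y-h=\bigl(\beta N(\mu c\lambda-\beta haN)-\mu^{2}cd\bigr)/(aN^{2}\beta^{2})$ with the sign of the threshold, verify Routh--Hurwitz at $\tau=0$, and exclude purely imaginary roots for $\tau>0$. The differences are in execution, and they work in your favor. First, by keeping the bracket in the factored form $(\zeta+d+\beta\bar v)(\zeta+a)(\zeta+\mu)-a\mu(\zeta+d)e^{-\zeta\tau}$, obtained via the identities $aN\beta\bar x=a\mu$ and $d+\beta\bar v=\lambda\beta N/\mu$, you get the correct coefficients transparently; the paper's expanded constants contain slips (its $B$ omits the term $a\beta\bar v$, and its $C$ wrongly absorbs $-g_{2}$, which at $\tau=0$ produces the constant $a\mu(\beta\bar v-d)$ instead of the correct $a\mu\beta\bar v$). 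Second, and more importantly, your no-crossing argument for $\tau>0$ is complete: equating moduli yields a cubic in $X=\omega^{2}$ all of whose coefficients are strictly positive under $\lambda\beta N-d\mu>0$, hence no positive root exists. The paper instead forms $F(X)$ and argues only that $F(0)>0$ and $F(X)\to+\infty$, which by itself does not exclude positive roots (a cubic with positive leading coefficient and positive constant term can still dip below zero); your coefficient-positivity check is exactly what closes that gap. Finally, you state item~(2) explicitly---$\zeta_\ast>0$ is a root of $\Delta$ for every $\tau\geq 0$, hence instability---whereas the paper leaves this implicit in the remark that \eqref{eq:zeta:sol} solves \eqref{eq4}.
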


\begin{proof}
Let $\lambda \beta N - d \mu >0 $ and $\mu c \lambda -\beta h aN >0$. 
The characteristic equation \eqref{eq1} at $E_{1}$ is given by
\begin{equation}
\label{eq4}
\left(\zeta -c \bar{x} \bar{y}+h\right)\left(
\zeta^{3} + A \zeta^{2} +B \zeta +C 
-e^{-\zeta \tau}\left(g_{1} \zeta + g_{2}\right)\right)=0,
\end{equation}
where 
\begin{equation*}
\begin{split}
A&=d+ \mu +a + \beta \bar{v},\\
B&= \mu d +ad + a \mu + \mu \beta \bar{v},\\
C&= a \mu (d+ \beta \bar{v}) - \beta aN d \bar{x},\\
g_{1} &= \beta aN \bar{x},\\
g_{2} &= \beta aN d \bar{x}.
\end{split}
\end{equation*}
Note that 
\begin{equation}
\label{eq:zeta:sol}
\zeta = \frac{\beta N\left(\mu c \lambda - \beta h aN\right)
-\mu^{2} cd}{aN^{2}\beta^{2}}
\end{equation}
is a solution of \eqref{eq4}. If 
$\beta N(\mu c \lambda - \beta h aN)-\mu^{2} cd <0$, then \eqref{eq:zeta:sol}
is a real negative root of the characteristic equation \eqref{eq4}, 
and we just need to analyze equation
\begin{equation}
\label{eq5}
\zeta^{3} + A \zeta^{2} +B \zeta +C - e^{-\zeta \tau}(g_{1} 
\zeta + g_{2})=0.
\end{equation}
Consider now $\tau =0$. From equation \eqref{eq5}, we have
\begin{equation}
\label{eq6}
\zeta^{3} + P \zeta^{2} +Q \zeta +R=0,
\end{equation}
where
\begin{equation*}
\begin{split}
P&= d+ \mu +a + \beta \bar{v},\\
Q&= \mu d +ad + a \mu + \mu \beta \bar{v} - aN \beta \bar{x},\\
R&= a \mu (d+ \beta \bar{v}) -2 \beta  aN d \bar{x}.
\end{split}
\end{equation*}
Because $\beta N \lambda - d \mu >0 $, from the Routh--Hurwitz stability criterion, 
it follows that all roots of \eqref{eq6} have negative real part. Thus, 
$E_{1}$ is locally asymptotically stable for $\tau =0$.
Let $\tau >0$. Suppose that \eqref{eq4} has pure imaginary roots
$\zeta = \omega i$ with $\omega >0$. If we replace $\zeta$ in
\eqref{eq5} by $\zeta = \omega i$, and separate the real and
imaginary parts, then we obtain
\begin{equation*}
\begin{cases}
\displaystyle - A \omega^{2} + C 
= g_{2} \cos(\omega \tau)+ g_{1} \omega \sin(\omega \tau),\\
- \omega^{3} + B \omega 
= g_{1} \omega \cos(\omega \tau) - g_{2} \sin(\omega \tau).
\end{cases}
\end{equation*}
By adding up the squares of the two equations, and by using the
fundamental trigonometric formula, we obtain that
$$
\omega^{6} + (A^{2} -2 B ) \omega^{4} + ( B^{2} -2AC- g_{1}^{2} )
\omega^{2} +C^{2} -g_{2}^{2} =0.
$$
Letting $X= \omega^{2}$, yields
\begin{equation*}
F(X)= X^{3} + (A^{2} -2B ) X^{2} + ( B^{2} -2AC- g_{1}^{2} ) X 
+C^{2} -g_{2}^{2} =0.
\end{equation*}
We have $F(0)= \lambda^{2} \beta^{2}a^{2}N^{2}-a^{2}
\mu^{2}d^{2}>0$ and $\lim_{\zeta\rightarrow +\infty} f(\zeta)
= +\infty$. Hence, \eqref{eq5} has no positive solution because
$\lambda \beta N - d \mu >0 $. Therefore, there is no root $ \zeta
= \omega i$ with $ \omega >0$ for \eqref{eq5}, implying that the
root of \eqref{eq5} cannot cross the purely imaginary axis. Thus,
all roots of \eqref{eq4} have negative real parts. Then, 
$E_{1}$ is locally asymptotically stable when 
$\beta N(\mu c \lambda-\beta h aN)-\mu^{2} cd <0$.
\end{proof}

For the second endemic equilibrium point $E_2$, 
the following result holds.

\begin{theorem} 
\label{y}
Assume that $\lambda \mu c- \beta aN h >0$. If $ \beta N(\lambda
\mu c - \beta h aN)-\mu^{2} cd >0 $, then the infected equilibrium
$E_{2}$ is locally asymptotically stable for $\tau = 0$.
\end{theorem}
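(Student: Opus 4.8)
The plan is to use the fact that for $\tau = 0$ the delay disappears, so the linearized system \eqref{sys1} collapses to the autonomous linear system $\dot{W}(t) = (A_1 + A_2)\,W(t)$ with $W = (X,Y,V,Z)^{T}$, and local asymptotic stability of $E_2$ is equivalent to all eigenvalues of $M := A_1 + A_2$ (evaluated at $E_2$) having negative real part. The first thing I would record is the equilibrium identity coming from the fourth equation of \eqref{sys}: since $\bar{z} \neq 0$ at $E_2$, we have $c\bar{x}\bar{y} = h$, so the $(4,4)$ entry of $M$ vanishes and
\begin{equation*}
M =
\begin{pmatrix}
-d-\beta\bar{v} & 0 & -\beta\bar{x} & 0 \\
\beta\bar{v} & -a-p\bar{z} & \beta\bar{x} & -p\bar{y} \\
0 & aN & -\mu & 0 \\
c\bar{y}\bar{z} & c\bar{x}\bar{z} & 0 & 0
\end{pmatrix}.
\end{equation*}
Unlike the case of $E_1$ (where $\bar z=0$ decoupled the $Z$-equation and split off the linear factor $\zeta-c\bar x\bar y+h$), here $\bar z>0$ makes the entries $c\bar y\bar z$ and $c\bar x\bar z$ nonzero, so the $Z$-equation stays coupled and no factor splits off.

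Next I would expand $\det(\zeta\,Id - M)$ to obtain a genuine monic quartic
\begin{equation*}
\zeta^4 + b_1 \zeta^3 + b_2 \zeta^2 + b_3 \zeta + b_4 = 0.
\end{equation*}
The coefficient $b_1 = -\mathrm{tr}(M) = (d+\beta\bar{v}) + (a+p\bar{z}) + \mu$ is positive at once. For $b_2, b_3, b_4$ I would simplify the cofactor expansion using the remaining equilibrium identities, namely $\mu\bar{v} = aN\bar{y}$ (third equation), $\lambda = \bar{x}(d+\beta\bar{v})$ (first equation) and $\beta\bar{x}\bar{v} = \bar{y}(a+p\bar{z})$ (second equation), so that each $b_i$ is collected into an expression from which the positivity of $\bar{x},\bar{y},\bar{v},\bar{z}$ can be read off.

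Stability I would then settle via the Routh--Hurwitz criterion for a quartic, which requires
\begin{equation*}
b_1 > 0, \qquad b_4 > 0, \qquad b_1 b_2 - b_3 > 0, \qquad b_3(b_1 b_2 - b_3) - b_1^2 b_4 > 0.
\end{equation*}
This is where the two hypotheses enter, precisely through the feasibility of $E_2$: the assumption $\lambda\mu c - \beta aNh > 0$ makes $\bar{x},\bar{y},\bar{v}$ positive, while a direct computation shows that $\beta N(\lambda\mu c - \beta h aN) - \mu^2 cd > 0$ is equivalent to $\bar{z} > 0$ (indeed $\bar{z} = \tfrac{a}{p}(\beta N\bar{x}/\mu - 1)$, which is positive exactly under this sign condition). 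Hence both hypotheses together place $E_2$ in the positive orthant, so every equilibrium coordinate appearing in the sign analysis is strictly positive.

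The main obstacle will be the last Routh--Hurwitz inequality $b_3(b_1 b_2 - b_3) - b_1^2 b_4 > 0$: once the $b_i$ are substituted it is a high-degree polynomial inequality in the model parameters, and proving its positivity will take patient regrouping — factoring out positive equilibrium quantities and repeatedly eliminating $\lambda$, $\bar{v}$ and $h$ through the equilibrium identities above — until the left-hand side appears as a manifestly positive combination. The signs of $b_2$, $b_3$ and $b_4$ should follow more directly from the same substitutions.
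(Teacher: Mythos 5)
You take the same route as the paper: specialize to $\tau=0$, reduce to the eigenvalues of the Jacobian $M=A_{1}+A_{2}$ at $E_{2}$ (the paper equivalently sets $\tau=0$ in its characteristic equation \eqref{eq8}), obtain a monic quartic, and conclude via Routh--Hurwitz, with the hypotheses entering through positivity of the coordinates of $E_{2}$. Your scaffolding is all correct: $c\bar{x}\bar{y}=h$ at $E_{2}$, the identities $\mu\bar{v}=aN\bar{y}$, $\lambda=\bar{x}(d+\beta\bar{v})$, $\beta\bar{x}\bar{v}=\bar{y}(a+p\bar{z})$, and the equivalence of $\beta N(\lambda\mu c-\beta h aN)-\mu^{2}cd>0$ with $\bar{z}>0$. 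One caution when comparing with the paper: you state the correct Hurwitz conditions for a quartic, whereas the paper asserts ``$FG-EH>0$'', which is not the third Hurwitz determinant $G(EF-G)-E^{2}H$; do not try to reconcile your conditions with that inequality literally.

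The genuine gap is that you never establish any of the inequalities on which the whole argument rests: $b_{2},b_{3},b_{4}>0$, $b_{1}b_{2}-b_{3}>0$ and, above all, $b_{3}(b_{1}b_{2}-b_{3})-b_{1}^{2}b_{4}>0$ are only promised (``patient regrouping''), and that promise is precisely the content of the theorem, since everything preceding it is routine linearization. The verification is where your identities pay off, and it is not all of equal difficulty. Writing $\alpha=d+\beta\bar{v}$, $\gamma=a+p\bar{z}$, $q=ph\bar{z}$, and using $\beta aN\bar{x}=\mu\gamma$ (a consequence of your second and fourth identities), one finds $b_{1}=\alpha+\gamma+\mu$, $b_{2}=\alpha\gamma+\mu\alpha+q$, $b_{3}=\mu\gamma\beta\bar{v}+q(\alpha+\mu)$ and $b_{4}=\det M=\mu d\,q$, so positivity of $b_{2},b_{3},b_{4}$ is immediate once $\bar{z}>0$ (this is exactly where the second hypothesis is used, and only there); moreover $b_{1}b_{2}-b_{3}=\alpha^{2}\gamma+\mu\alpha^{2}+\alpha\gamma^{2}+\mu\alpha\gamma+q\gamma+\mu^{2}\alpha+\mu\gamma d$ is a sum of positive monomials. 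The one item requiring genuine work is the last determinant: it does hold, for instance because $q(\alpha+\mu)(b_{1}b_{2}-b_{3})$ dominates $\mu dq\,b_{1}^{2}$ term by term (each monomial comparison reduces to $d<\alpha$), leaving the surplus $\mu\gamma\beta\bar{v}(b_{1}b_{2}-b_{3})>0$; some such argument must actually be supplied. To be fair, the paper itself merely lists its coefficients and asserts the inequalities, so your proposal is no less complete than the published proof; as a proof, however, it stops exactly where the mathematics begins.
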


\begin{proof}
Let $ \beta N(\lambda \mu c - \beta h aN)-\mu^{2} cd >0 $. The
characteristic equation \eqref{eq1} at $E_{2}$ is given by
\begin{multline}
\label{eq8}
\zeta^{4}+ A \zeta^{3}+ B \zeta^{2}+ C \zeta +D+[-\beta
aN\bar{x}
\zeta^{2}+(c \beta aN \bar{y}\bar{x}^{2}
-\beta h aN \bar{x}-\beta aN d \bar{x}) \zeta \\
+c \beta aN d\bar{y} \bar{x}^{2}-\beta hd aN \bar{x}]e^{-\zeta \tau}=0,
\end{multline}
where
\begin{equation*}
\begin{aligned}
A&= \mu + a + d + p\bar{z}+ \beta \bar{v},& \\ 
B&= a \mu + \mu d + ad +p \mu \bar{z}+ pd\bar{z}-ph \bar{z}
+ \beta \mu \bar{v}+a \beta \bar{v}+p \beta \bar{z}\bar{v} ,&\\
C&= ad \mu + p \mu h \bar{z}+ phd \bar{z}+ p \mu d \bar{z}
+ a \mu \beta \bar{v}+ ph \beta \bar{z}\bar{v}+ p \mu \beta \bar{z}\bar{v},&\\
D&= p \mu hd \bar{z}+ p \mu h \beta \bar{z}\bar{v}
- aN pc \beta\bar{x}\bar{z}\bar{y}^{2}.&
\end{aligned}
\end{equation*}
If $\tau=0$, then the characteristic equation \eqref{eq8} becomes
\begin{equation*}
\zeta^{4}+ E \zeta^{3}+ F \zeta^{2}+ G \zeta + H=0,
\end{equation*}
where
\begin{equation*}
\begin{aligned}
E&= \mu + a + d + p\bar{z}+ \beta \bar{v},\\ 
F&= a \mu + \mu d + ad +p \mu \bar{z}+ pd\bar{z}-ph \bar{z}+ \beta \mu \bar{v}
+a \beta \bar{v}+p \beta \bar{z}\bar{v}- \beta aN \bar{x},\\
G&= ad \mu + p \mu h \bar{z}+ phd \bar{z}+ p \mu d \bar{z}+ a \mu \beta \bar{v}
+ ph \beta \bar{z}\bar{v}+ p \mu \beta \bar{z}\bar{v}- \beta aN d \bar{x},\\
H&= p \mu hd \bar{z}+ p \mu h \beta \bar{z}\bar{v}- aN \beta ph \bar{y}\bar{z}.
\end{aligned}
\end{equation*}
Thus, $E>0$, $F>0$, $G>0$, $H>0$ and $FG-EH>0$, 
whenever $ \beta N(\lambda \mu c - \beta h aN)-\mu^{2} cd >0$.
\end{proof}

Let $\tau>0$. Suppose that \eqref{eq8} has pure imaginary roots
$\zeta= \omega i$. Replacing $\zeta$ in \eqref{eq8} by $\omega i$,
and separating the real and imaginary parts, we obtain that
\begin{equation*}
\begin{cases} 
\displaystyle \omega^{4}-I\omega^{2}+J = K \omega^{2} + L \omega + M\\
-O \omega^{3}+ N \omega = P \omega^{2}+ Q \omega +R
\end{cases}
\end{equation*}
with
\begin{equation*}
\begin{aligned}
I&= a \mu + \mu d + ad +p \mu \bar{z}+ pd\bar{z}-ph \bar{z}
+\beta\mu \bar{v}+a \beta \bar{v}+p \beta \bar{z}\bar{v},\\
J&= p \mu hd \bar{z}+ p \mu h \beta \bar{z}\bar{v}
- aN pc \beta\bar{x}\bar{z}\bar{y}^{2},\\
K&= -\beta aN \bar{x}\cos(\omega \tau),\\
L&= -c \beta aN \bar{y} \bar{x}^{2}\sin(\omega \tau)
+ \beta h aN \bar{x} \sin(\omega \tau)+ \beta aN d \bar{x}  \sin(\omega \tau),\\
M&= -c \beta aN d\bar{y}\bar{x}^{2}\cos(\omega \tau) +\beta hd aN\bar{x}\cos(\omega \tau),\\
N&= ad \mu + p \mu h \bar{z}+ phd \bar{z}+ p \mu d \bar{z}
+ a \mu \beta \bar{v}+ ph \beta \bar{z}\bar{v}+ p \mu \beta \bar{z}\bar{v},\\
O&= \mu + a + d + p\bar{z}+ \beta \bar{v},\\
P&= \beta aN \bar{x}\sin(\omega \tau),\\
Q&= -c \beta aN \bar{y}\bar{x}^{2}\cos(\omega \tau) 
+ \beta h aN \bar{x}\cos(\omega \tau)+ \beta aN d \bar{x} \cos(\omega \tau),\\
R&= c \beta aN d \bar{y}\bar{x}^{2}\sin(\omega \tau)-\beta hd aN \bar{x}\sin(\omega \tau).
\end{aligned}
\end{equation*}
By adding up the squares of both equations, and using the
fundamental trigonometric formula, we obtain that
\begin{equation}
\label{eq9}
\omega^{8}+ S \omega^{6}+ T \omega^{4}+ U \omega^{2} +V=0,
\end{equation}
where
\begin{equation*}
\begin{aligned}
S&= O^{2}-2I,& \\ 
T&= 2J+I^{2}-2NO- \beta^{2}a^{2}N^{2}\bar{x}^{2},&\\
U&=2IJ+N^{2}-c^{2}\beta^{2}a^{2}N^{2}\bar{y}^{2}\bar{x}^{4}
+2c\beta^{2}a^{2}N^{2}h\bar{y}\bar{x}^{3}\\
&\quad -\beta^{2}a^{2}N^{2}h^{2}\bar{x}^{2}
+2c\beta^{2}a^{2}N^{2}d\bar{y}\bar{x}^{3}
-2\beta^{2}a^{2}N^{2}hd\bar{x}^{2}\\
&\quad -\beta^{2}a^{2}N^{2}d^{2}\bar{x}^{2}
-2c \beta^{2}a^{2}N^{2}d \bar{y}\bar{x}^{3}
+2 \beta^{2}a^{2}N^{2}hd \bar{x}^{2},&\\
V&= J^{2}+2c\beta^{2}a^{2}N^{2}d^{2}h\bar{y}\bar{x}^{3}
-\beta^{2}h^{2}d^{2}a^{2}N^{2}\bar{x}^{2}.&
\end{aligned}
\end{equation*}
Equation \eqref{eq9} admits at least two pure imaginary roots. 
Indeed, let $\lambda=1$, $d=\frac{1}{10}$, $\beta=\frac{1}{2}$,
$a=\frac{1}{5}$, $p=1$, $c=\frac{1}{10}$, $h=\frac{1}{10}$,
$N=1500$, $\mu=1$. Then, equation \eqref{eq9} is given by
$$
\omega^8+\frac{3262009}{360000} \omega^6-\frac{419609}{4500000} \omega^4
+\frac{1060237}{300000000} \omega^2+\frac{313}{2000000} = 0.
$$
This equation admits four pure imaginary roots; due to length 
of their writing space, we give here their approximated values:
$$
0.1550207983i,\quad -0.1550207983i, \quad 3.008467478i\ 
\text{ and }\ -3.008467478i.
$$
Therefore, from Rouch\'e's theorem, we cannot conclude anything
about the stability of $E_{2}$. Numerically, however, we can show 
that the endemic equilibrium $E_{2}$ is locally asymptotically stable 
for certain values of $\tau$. For example, let $\tau=10$, $\lambda=1$,
$d=\frac{1}{10}$, $\beta=0.00025$, $p=0.001$, $a=0.2$, $c=0.03$,
$N=1500$, and $\mu=3$. In this case, it is easy to show analytically 
that the characteristic equation \eqref{eq8} is given by
$f(\zeta)=0$ with
$$
f(\zeta)=\zeta^{4}+\frac{1997}{600} \zeta^{3}+\frac{121}{120} \zeta^{2}
+\frac{401}{5000} \zeta +\frac{1}{2000}-\frac{5}{8}e^{-\zeta \tau} \zeta^{2}
-\frac{1}{16}e^{-\zeta \tau} \zeta.
$$
Thus, $f(0)= \frac{1}{2000}$ and the derivative is always positive for
$\tau \geq 0$. Therefore, $f(\zeta)$ does not have nonnegative
real roots. Analogously, we can show numerically that $E_{2}$ is locally
asymptotically stable for some other positive values of the time delay
$\tau$. A general result remains, however, an open question.

% -------------------------------------- 

\section{Optimal control}
\label{sec:3}

In this section, we study an optimal control problem 
associated with the delayed HIV model 
with CTL immune response \eqref{sys}.

% ----------------

\subsection{The optimization problem}
\label{sec:3:1}

We suggest the following delayed control system 
with two control variables $u_1$ and $u_2$:
\begin{equation}
\label{sy1}
\begin{cases}
\displaystyle \frac{dx(t)}{dt} 
= \lambda -dx(t)- \beta(1-u_{1}(t)) x(t)v(t), \\[0.3cm]
\displaystyle \frac{dy(t)}{dt} 
= \beta (1-u_{1}(t)) x(t-\tau)v(t-\tau)- ay(t)- py(t)z(t),\\[0.3cm]
\displaystyle \frac{dv(t)}{dt} 
= aN(1-u_{2}(t))y(t)- \mu v(t) , \\[0.3cm]
\displaystyle \frac{d z(t)}{dt} 
= cx(t)y(t)z(t) - hz(t),
\end{cases}
\end{equation}
where the controls belong to the control set $U$ defined by
\begin{equation*}
U=\left\{(u_{1},u_{2}) : u_{i} \text{ is measurable}, 
\quad 0\leq u_{i}(t)\leq 1, \quad t\in [0,t_{f}], \quad i=1,2\right\}.
\end{equation*}
Here, $u_1$ represents the efficiency of drug therapy in blocking
new infections, so that the infection rate in presence of drug is
$(1 - u_1)$; while $u_2$ stands for the efficiency of drug therapy
in inhibiting viral production, such that the virion production
rate under therapy is $(1 - u_2)$. The optimization problem under 
consideration is to maximize the objective functional
\begin{equation}
\label{sy2}
\begin{aligned}
\mathcal{J}(u_{1},u_{2})=\int^{t_{f}}_{0}\left\{x(t)+z(t)
-\left[\frac{A_{1}}{2}u_{1}^{2}(t)
+\frac{A_{2}}{2}u_{2}^{2}(t)\right]\right\}dt,
\end{aligned}
\end{equation}
where $t_{f}$ is the time period of treatment and the positive
constants $A_{1}$ and $A_{2}$ stand for the benefits and costs of
the introduced treatment, subject to the control system \eqref{sy1}. 
The two control functions, $u_{1}(\cdot)$ and $u_{2}(\cdot)$, 
are assumed to be bounded and Lebesgue integrable. Summarizing,
the optimal control problem under study consists to find
$u_{1}^{*}$ and $u_{2}^{*}$ such that
\begin{equation}
\label{sy3}
\begin{aligned}
J(u_{1}^{*},u_{2}^{*})&=\max\{\mathcal{J}(u_{1},u_{2}) : (u_{1},u_{2})\in U\}\\
&\text{subject to } \eqref{sy1}, \eqref{2}, \eqref{3}.
\end{aligned}
\end{equation}
Pontryagin's minimum principle \cite{Gollmann} provides necessary optimality
conditions for such optimal control problem with delays. Roughly speaking,
this principle reduces \eqref{sy3} to a problem of maximizing an Hamiltonian $H$ 
pointwisely with respect to $u_{1}$ and $u_{2}$. In our case the Hamiltonian 
is given by
$$
H(x,y,v,z,x_{\tau},v_{\tau},u_{1},u_{2},\psi_1,\psi_2,\psi_3,\psi_4)
=\frac{A_{1}}{2}u_{1}^{2}+\frac{A_{2}}{2}u_{2}^{2}-x-z
+\displaystyle\sum_{i=0}^{4}\psi_{i}f_{i}(x,y,v,z,x_{\tau},v_{\tau},u_{1},u_{2})
$$
with
\begin{equation*}
\left\{
\begin{aligned}
f_{1}&= \lambda-dx- \beta(1-u_{1})xv, \\
f_{2}&= \beta(1-u_{1})x_{\tau}v_{\tau}-ay-pyz, \\
f_{3}&= aN(1-u_{2})y- \mu v, \\
f_{4}&= cxyz-hz.
\end{aligned}
\right.
\end{equation*}
By applying Pontryagin's minimum principle \cite{Gollmann}, 
we obtain the following result.

\begin{theorem} 
For any initial conditions \eqref{2}--\eqref{3}, 
the system \eqref{sy1} has a unique solution. 
This solution is nonnegative and bounded for all $t \ge 0$. 
In addition, if $u_{1}^{*}$ and $u_{2}^{*}$
are optimal controls and $x^{*}$, $y^{*}$, $v^{*}$ and $z^{*}$ 
corresponding solutions of the state system \eqref{sy1}, 
then there exists adjoint variables $\psi_{1}$, $\psi_{2}$, 
$\psi_{3}$ and $\psi_{4}$ satisfying the adjoint equations
\begin{equation*}
\begin{cases}
\psi'_{1}(t)
=1+\psi_{1}(t)\big[d+\big(1-u_{1}^{*}(t)\big)\beta v^{*}(t)\big] 
- \psi_{4}(t)cy^{*}(t)z^{*}(t)\\
\qquad\quad +\chi_{[0,t_{f}-\tau]}(t)\psi_{2}\big(t+\tau\big)
\big(u_{1}^{*}\big(t+\tau\big)-1\big)\beta v^{*}(t),\\
\psi'_{2}(t)=\psi_{2}(t)a-\psi_{3}(t)\big(1-u^{*}_{2}(t)\big)aN
- \psi_{4}(t)cx^{*}(t)z^{*}(t)+ \psi_{2}(t)pz^{*}(t),\\
\psi'_{3}(t)=\psi_{1}(t)\big[\beta(1-u_{1}^{*}(t))x^{*}(t)\big]+\psi_{3}(t)\mu 
+\chi_{[0,t_{f}-\tau]}(t)\psi_{2}(t+\tau)\left[\beta (u^{*}_{1}(t+\tau)-1)x^{*}(t)\right],\\
\psi'_{4}(t)=1+\psi_{2}(t)py^{*}(t)+\psi_{4}(t)\left[h-cx^{*}(t)y^{*}(t)\right]
\end{cases}
\end{equation*}
with transversality conditions
\begin{equation*}
\psi_{i}(t_{f})=0, \quad i=1,\ldots,4.
\end{equation*}
Moreover, the optimal controls satisfy
\begin{equation}
\label{opt}
\begin{aligned}
u^{*}_{1}(t)= &\min\bigg(1,\max\bigg(0,\frac{\beta}{A_{1}}\bigg[\psi_{2}(t)
v^{*}(t-\tau) x^{*}(t-\tau)-\psi_{1}(t)v^{*}(t)x^{*}(t)\bigg]\bigg)\bigg),\\
u^{*}_{2}(t)= &
\min\bigg(1,\max\bigg(0,\frac{1}{A_{2}}\psi_{3}(t)aNy^{*}(t)\bigg)\bigg).
\end{aligned}
\end{equation}
\end{theorem}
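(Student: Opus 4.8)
\emph{Overview and well-posedness.} The plan is to split the statement into a well-posedness part for the controlled state system and a necessary-conditions part coming from Pontryagin's principle for delayed dynamics. For any fixed admissible pair $(u_1,u_2)\in U$, the right-hand side of \eqref{sy1} differs from that of \eqref{sys} only through the measurable factors $1-u_1(t)$ and $1-u_2(t)$, both taking values in $[0,1]$. I would therefore essentially repeat the proof of Theorem~\ref{thm1}: the vector field is measurable in $t$ and locally Lipschitz in the state, so Carath\'eodory theory gives a unique local solution, and the integrating-factor representations of Theorem~\ref{thm1} carry over (with $\beta$ replaced by $\beta(1-u_1)$ and $aN$ by $aN(1-u_2)$), exhibiting each component as a nonnegative quantity. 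For boundedness I would reuse $F(t)=aN x(t)+aN y(t+\tau)+\tfrac{a}{2}v(t+\tau)$; since $0\le 1-u_i\le 1$, the cross terms $aN\beta(1-u_1)xv$ still cancel and the surviving coefficients only shrink, so the inequality $\dot F\le \lambda aN-\varrho F$ holds with the same $\varrho=\min(d,\tfrac{a}{2},\mu)$, whence Gronwall's lemma bounds $x,y,v$, the bound on $z$ follows as before, and every local solution extends globally.

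\emph{Existence of an optimal pair.} Because the trajectories are uniformly bounded, the dynamics are affine in $(u_1,u_2)$, the integrand of \eqref{sy2} is concave in the controls (as $A_1,A_2>0$), and $U$ is convex and compact, a standard Filippov--Cesari existence argument produces an optimal pair $(u_1^*,u_2^*)$; uniqueness for sufficiently small $t_f$ then follows a posteriori from the characterization \eqref{opt} via a contraction estimate on the coupled state--adjoint system.

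\emph{Adjoint system, transversality and control law.} With the Hamiltonian $H$ as defined, I would invoke the delayed Pontryagin principle of \cite{Gollmann}, for which each costate obeys $\psi_i'(t)=-\partial H/\partial(\text{state}_i)(t)-\chi_{[0,t_f-\tau]}(t)\,\partial H/\partial(\text{delayed state}_i)(t+\tau)$. Only $x$ and $v$ enter the dynamics through the delayed argument (both via $f_2$), so only $\psi_1$ and $\psi_3$ acquire an advanced term carrying the indicator $\chi_{[0,t_f-\tau]}$, while $\psi_2$ and $\psi_4$ satisfy ordinary adjoint equations; evaluating the partial derivatives of $H$ term by term then reproduces the four stated equations, and the absence of a terminal payoff yields $\psi_i(t_f)=0$. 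Finally, minimizing $H$ pointwise over $u\in U$ uses that $H$ is strictly convex and quadratic in each $u_i$: the stationarity conditions $A_1u_1+\beta(\psi_1 xv-\psi_2 x_\tau v_\tau)=0$ and $A_2u_2-aN\psi_3 y=0$ give the interior optima, and clamping to $[0,1]$ via $\min(1,\max(0,\cdot))$ produces \eqref{opt}.

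\emph{Main obstacle.} The genuinely delicate point is the bookkeeping of the delayed-to-advanced transfer in the adjoint system: one must identify exactly which states appear as $x_\tau,v_\tau$ in \eqref{sy1}, transport the corresponding derivatives of $H$ to time $t+\tau$, multiply by $\chi_{[0,t_f-\tau]}$, and insert them with the correct sign into the equations for $\psi_1$ and $\psi_3$. This is the only place where the delay alters the structure relative to the classical principle, and it is where an indicator or sign slip would most easily occur; everything else is a direct transcription of Theorem~\ref{thm1} and of the standard maximum-principle calculus.
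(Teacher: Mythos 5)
Your proposal matches the paper's proof essentially step for step: well-posedness by repeating the argument of Theorem~\ref{thm1} using $0 \le 1-u_i(t) \le 1$, the adjoint system and transversality conditions from the delayed Pontryagin principle of \cite{Gollmann} (with the advanced terms carrying $\chi_{[0,t_f-\tau]}$ appearing only in the $\psi_1$ and $\psi_3$ equations, since only $x_\tau$ and $v_\tau$ enter the dynamics, through $f_2$), and the control law \eqref{opt} from the stationarity conditions $\partial H/\partial u_1 = \partial H/\partial u_2 = 0$ clamped to $[0,1]$. The only deviation is your middle paragraph on existence and small-time uniqueness of an optimal pair, which is not required by this statement (existence is the subject of the paper's next theorem) but does no harm.
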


\begin{proof}
The proof of positivity and boundedness of solutions is
similar to the one of Theorem~\ref{thm1}. It is enough to
use the fact that $u_i(t) \in U$, $i=1,2$, which means that
$\|u_i (t)\|_{L^{\infty}} \le 1$. For the rest of the proof, 
we remark that the adjoint equations and transversality conditions 
are obtained by using the Pontryagin minimum principle with delays 
of \cite{Gollmann}, from which
\begin{equation*}
\begin{cases}
\psi'_{1}(t)=-\frac{\partial H}{\partial x}(t)
-\chi_{[0,t_{f}-\tau]}(t)\frac{\partial H}{\partial
x_{\tau}}(t+\tau),\qquad &\psi_{1}(t_{f})=0,\\
\psi'_{2}(t)=-\frac{\partial H}{\partial y}(t),
\qquad &\psi_{2}(t_{f})=0,\\
\psi'_{3}(t)=-\frac{\partial H}{\partial v}(t)-
\chi_{[0,t_{f}-\tau]}(t)\frac{\partial H}{\partial
v_{\tau}}(t+\tau),\qquad &\psi_{3}(t_{f})=0,\\
\psi'_{4}(t)=-\frac{\partial H}{\partial z}(t),
\qquad &\psi_{4}(t_{f})=0.
\end{cases}
\end{equation*}
From the optimality conditions,
\begin{equation*}
\frac{\partial H}{\partial u_{1}}(t)=0,
\qquad\frac{\partial H}{\partial u_{2}}(t)=0,
\end{equation*}
that is,
\begin{gather*}
A_{1}u_{1}(t)+\beta\psi_{1}(t)v(t)x(t)-\beta
\psi_{2}(t)v(t-\tau) x(t-\tau)= 0,\\
A_{2}u_{2}(t)-aN \psi_{3}(t)y(t)= 0.
\end{gather*}
Taking into account the bounds in $U$ for the two controls, 
one obtains $u_{1}^{*}$ and $u_{2}^{*}$ in form \eqref{opt}.
\end{proof}

% --------------------------------------

\subsection{Existence of an optimal control pair}
\label{sec:3:2}

The existence of the optimal control pair can be directly obtained
using the results in \cite{Fleming,Lukes}. More precisely, we have
the following theorem.

 \begin{theorem}  
There exists an optimal control
pair $(u_{1}^{*},u_{2}^{*})\in U$ solution of \eqref{sy3}.
\end{theorem}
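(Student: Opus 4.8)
The plan is to verify the standard hypotheses of the Fleming--Rishel/Cesari existence theorem, as presented in \cite{Fleming,Lukes}, which guarantees the existence of a minimizer for a Bolza-type problem. Since maximizing $\mathcal{J}$ is equivalent to minimizing $-\mathcal{J}$, I would first recast \eqref{sy3} as a minimization problem with running cost $L=\frac{A_{1}}{2}u_{1}^{2}+\frac{A_{2}}{2}u_{2}^{2}-x-z$, and then check the required conditions one at a time.

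First, I would establish that the set of admissible control--state pairs is nonempty: for any $(u_1,u_2)\in U$, the well-posedness result for the controlled system \eqref{sy1} established above (by the same argument as Theorem~\ref{thm1}) yields a unique, nonnegative and bounded trajectory, so the constant controls $u_1\equiv u_2\equiv 0$ already furnish an admissible pair. Second, I would note that $U$ is convex and closed, since its pointwise values lie in the convex compact square $[0,1]^2$ and the set is closed under the relevant convergence. Third, using the boundedness of $x,y,v,z$ established earlier, the right-hand side of \eqref{sy1} is bounded by a linear function of the state and is affine, hence continuous and convex, in the controls; this provides the a~priori bounds and the convexity of the velocity set needed for lower semicontinuity.

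Fourth, and most importantly for the variational argument, I would verify convexity and coercivity of the integrand in the controls. The map $(u_1,u_2)\mapsto \frac{A_{1}}{2}u_{1}^{2}+\frac{A_{2}}{2}u_{2}^{2}$ is strictly convex, and since $x+z$ is bounded on $[0,t_f]$, there is a constant $C$ with
\begin{equation*}
L \ge \frac{\min(A_1,A_2)}{2}\bigl(u_1^2+u_2^2\bigr)-C,
\end{equation*}
which is the coercivity condition $L\ge c_1\lvert u\rvert^{\rho}-c_2$ with exponent $\rho=2>1$. These facts together let me extract a minimizing sequence, pass to a weakly convergent subsequence of controls with strongly convergent states (by the compactness following from the a~priori bounds), and conclude by lower semicontinuity that the weak limit is an admissible optimal pair $(u_1^{*},u_2^{*})\in U$.

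The delicate point is the presence of the intracellular delay $\tau$, since the cited existence theorems are stated for systems without delay. I would address this by observing that the delayed terms $x(t-\tau)v(t-\tau)$ enter \eqref{sy1} only through the state history, which on $[-\tau,0]$ is fixed by \eqref{2}--\eqref{3} and on $[0,t_f]$ is already controlled by the established bounds; thus, along a minimizing sequence, the delayed arguments converge together with the states, and the standard lower-semicontinuity machinery applies once the state is viewed on the extended interval $[-\tau,t_f]$. This reduction to the non-delayed framework is the main obstacle, after which the verification of convexity and coercivity is routine.
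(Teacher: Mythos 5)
Your proposal is correct and follows essentially the same route as the paper: both verify the Fleming--Rishel existence conditions cited in \cite{Fleming,Lukes} --- nonemptiness of the admissible set (via well-posedness of \eqref{sy1}), convexity and closedness of $U$, linear boundedness of the right-hand side of the state system, concavity/convexity of the integrand in the controls, and the coercivity bound with exponent $\beta = 2 > 1$. The only substantive difference is that you explicitly address the delicate point of applying a non-delayed existence theorem to the delayed system \eqref{sy1}, reducing it to the standard framework on the extended interval $[-\tau, t_f]$; the paper's proof passes over this issue silently, so your argument is, if anything, more complete.
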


\begin{proof}
To use the existence result in \cite{Fleming}, 
we first need to check the following properties:
\begin{enumerate}
\item [$(P_1)$] the set of controls and corresponding state variables is nonempty;

\item [$(P_2)$] the control set $U$ is convex and closed;

\item [$(P_3)$] the right-hand side of the state system 
is bounded by a linear function in the
state and control variables;

\item [$(P_4)$] the integrand of the
objective functional is concave on $U$;

\item [$(P_5)$] there exist constants $c_{1}, c_{2} > 0$
and $\beta > 1$ such that the integrand 
$$
L(x,z,u_{1},u_{2}) = x + z 
- \left( \frac{A_{1}}{2}u_{1}^{2} +\frac{A_{2}}{2}u_{2}^{2}\right)
$$ 
of the objective functional \eqref{sy2} satisfies
\begin{equation*}
L(x,z,u_{1},u_{2}) \leq c_{2}-c_{1}(\mid u_{1}\mid^{2} 
+ \mid u_{2}\mid^{2})^{^{\frac{\beta}{2}}}.
\end{equation*}
\end{enumerate}
Using the result in \cite{Lukes}, 
we obtain existence of solutions of system
\eqref{sy1}, which gives condition $(P_1)$. The control set is
convex and closed by definition, which gives condition $(P_2)$.
Since our state system is bilinear in $u_{1}$ and $u_{2}$, the 
right-hand side of system \eqref{sy1} satisfies condition $(P_3)$, 
using the boundedness of solutions. Note that the integrand of our
objective functional is concave. Also, we have the last needed
condition:
\begin{equation*}
L(x,z,u_{1},u_{2}) \leq c_{2}
-c_{1}\left(\mid u_{1}\mid^{2} 
+ \mid u_{2}\mid^{2}\right),
\end{equation*}
where $c_{2}$ depends on the upper bound on $x$ and $z$, and
$c_{1}>0$ since $A_{1}>0$ and $A_{2}>0$. We conclude that there
exists an optimal control pair $(u_{1}^{*},u_{2}^{*})\in U$ such that
\begin{equation*}
J(u_{1}^{*},u_{2}^{*})
= \displaystyle\max_{(u_{1},u_{2})\in U} \mathcal{J}(u_{1},u_{2})
\end{equation*}
subject to \eqref{sy1}, \eqref{2} and \eqref{3}. 
The proof is complete.
\end{proof}

% --------------------------------------

\subsection{The optimality system}
\label{sec:3:3}

The optimality system consists of the state system coupled 
with the adjoint equations, the initial conditions, 
transversality conditions, and the
characterization of optimal controls \eqref{opt}. Precisely,
if we substitute the expressions of $u_{1}^{*}$ and $u_{2}^{*}$ 
in \eqref{sy1}, then we obtain the following optimality system:
\begin{equation*}
\begin{cases}
\displaystyle \frac{dx^{*}(t)}{dt}= \lambda- dx^{*}(t)
-\beta(1-u^{*}_{1}(t)) x^{*}(t) v^{*}(t),\\[0.3cm]
\displaystyle \frac{dy^{*}(t)}{dt}= \beta(1-u^{*}_{1}(t))
x^{*}(t-\tau) v^{*}(t-\tau)
-ay^{*}(t)-py^{*}(t)z^{*}(t),\\[0.3cm]
\displaystyle \frac{dv^{*}(t)}{dt}= aN(1-u_{2}^{*}(t))y^{*}(t)
-\mu v^{*}(t),\\[0.3cm]
\displaystyle \frac{dz^{*}(t)}{dt}
=cx^{*}(t)y^{*}(t)z^{*}(t)-hz^{*}(t),\\[0.3cm]
\displaystyle \frac{d\psi_{1}(t)}{dt}
=1+\psi_{1}(t)\big[d+\big(1-u_{1}^{*}(t)\big)\beta v^{*}(t)\big] 
- \psi_{4}(t)cy^{*}(t)z^{*}(t)\\[0.3cm]
\qquad\qquad +\chi_{[0,t_{f}-\tau]}(t)\psi_{2}\big(t
+\tau\big)\big(u_{1}^{*}\big(t+\tau\big)-1\big)\beta v^{*}(t),\\[0.3cm]
\displaystyle \frac{d\psi_{2}(t)}{dt}
=\psi_{2}(t)a-\psi_{3}(t)\big(1-u^{*}_{2}(t)\big)
aN- \psi_{4}(t)cx^{*}(t)z^{*}(t)+ \psi_{2}(t)pz^{*}(t),\\[0.3cm]
\displaystyle \frac{d\psi_{3}(t)}{dt}
=\psi_{1}(t)\big[\beta(1-u_{1}^{*}(t))x^{*}(t)\big]
+\psi_{3}(t)\mu +\chi_{[0,t_{f}-\tau]}(t)\psi_{2}(t+\tau)\big[\beta
(u^{*}_{1}(t+\tau)-1)x^{*}(t)\big],\\[0.3cm]
\displaystyle \frac{d\psi_{4}(t)}{dt}=1+\psi_{2}(t)py^{*}(t)
+\psi_{4}(t)\big[h-cx^{*}(t)y^{*}(t)\big],\\[0.3cm]
u^{*}_{1}=\min\bigg(1,\max\bigg(0,\frac{\beta}{A_{1}}\bigg[
\psi_{2}(t)v^{*}_{\tau}x^{*}_{\tau}
-\psi_{1}(t)v^{*}(t)x^{*}(t)\bigg]\bigg)\bigg),\\[0.3cm]
u^{*}_{2}=\min\bigg(1,\max\bigg(0,\frac{1}{A_{2}}
\psi_{3}(t)aNy^{*}(t)\bigg)\bigg),\\[0.3cm]
\psi_{i}(t_{f})=0, \quad i=1,\ldots,5.
\end{cases}
\end{equation*}

% --------------------------------------

\section{Numerical simulations} 
\label{sec:4}

In order to solve the optimality system given in Section~\ref{sec:3:3}, 
we use a numerical scheme based on forward and backward 
finite difference approximations. Precisely, 
we implemented Algorithm~\ref{our:alg}.

\begin{remark}
In our implementation, we choose the initial functions
\eqref{2}, satisfying \eqref{3}, as 
$x(t) \equiv x_0$, $y(t) \equiv y_0$, $v(t) \equiv v_0$, 
$z(t) \equiv z_0$, for all $t \in [-\tau, 0]$
(see Step~1 of Algorithm~\ref{our:alg}).
\end{remark}

% ---------------------------------
\begin{algorithm}
\caption{Numerical algorithm for the optimal control problem \eqref{sy3}}
\label{our:alg}

\medskip

\underline{Step 1}:

\medskip

for $i = -m, \ldots , 0,$ do:
$$
x_i = x_0, \quad y_i = y_0, 
\quad v_i = v_0, \quad z_i = z_0,
\quad u_1^i = 0,\quad u_2^i = 0
$$
end for;

for $i = n, \ldots , n + m$, do:
$$
\psi_1^i = 0, \quad \psi_2^i = 0, 
\quad \psi_3^i = 0, \quad \psi_4^i =0
$$
end for;

\medskip

\underline{Step 2}:

\medskip

for $i = 0$, \ldots , $n-1$, do:
\begin{equation*}
\begin{split}
x_{i+1} &= x_i + h[\lambda - d x_i - \beta(1-u_{1}^i)  x_i v_i],\\
y_{i+1} &= y_i + h[ \beta (1-u_{1}^i) x_{i-m}v_{i-m}-ay_i-py_i z_i],\\
v_{i+1} &= v_i + h[ aN(1-u_2^i)y_i-\mu v_i],\\
z_{i+1} &= z_i + h[ cx_i y_i z_i-hz_i],\\
\psi_1^{n-i-1} &=  \psi_1^{n-i} 
- h[ 1 + \psi_1^{n-i}(d +  (1-u_{1}^i)\beta v_{i+1}) 
- \psi_4^{n-i}(c y_{i+1} z_{i+1})\\
&\qquad +\chi_{[0,t_{f}-\tau]}(t_{n-i})
\psi_2^{n-i+m}(u_{1}^{i+m}-1)\beta v_{i+1}],\\
\psi_2^{n-i-1} 
&=\psi_2^{n-i} -h[\psi_2^{n-i}(a+p z_{i+1}) 
-\psi_2^{n-i}(aN(1-u_{2}^i))- \psi_4^{n-i}(c x_{i+1} z_{i+1}],\\
\psi_3^{n-i-1} 
&= \psi_3^{n-i} 
-h\big[\psi_1^{n-i}(1-u_1^i)\beta x_{i+1} + \psi_3^{n-i} \mu\\
&\qquad +\chi_{[0,t_{f}-\tau]}(t_{n-i})\psi_2^{n-i+m}(u_{1}^{i+m}-1) x_{i+1}\big],\\
\psi_4^{n-i-1} 
&= \psi_4^{n-i} - h[1+ p\psi_2^{n-i} y_{i+1} + \psi_4^{n-i}(h-cx_{i+1}y_{i+1})],\\
R_1^{i+1} &= (\beta/A_1)(\psi_2^{n-i-1}v_{i-m+1}x_{i-m+1}-\psi_1^{n-i-1}v_{i+1}x_{i+1}),\\
R_2^{i+1} &= (1/A_2)\psi_3^{n-i-1}aN y_{i+1},\\
u_1^{i+1} &=\min(1,\max(R_1^{i+1},0)),\\
u_2^{i+1} &=\min(1,\max(R_2^{i+1},0))
\end{split}
\end{equation*}
end for;

\medskip

\underline{Step 3}:

\medskip

for $i = 1, \ldots , n$, write:
$$
x^*(t_i) = x_i, \quad y^*(t_i) = y_i, \quad v^*(t_i) =v_i, 
\quad z^*(t_i) = z_i, \quad u_1^*(t_i) = u_1^i, 
\quad u_2^*(t_i) = u_2^i
$$
end for.
\end{algorithm}
% --------------------------------- 
In our simulations, the following parameters are used:
\begin{equation}
\label{eq:par:val}
\begin{gathered}
\lambda=1, \quad d=0.1, \quad \beta=0.00025, \quad  p=0.001,  \quad h=0.2,\\
a=0.2,  \quad c=0.03,  \quad \mu=3,  \quad A_1=30,  \quad A_2=40,  \quad  \tau=10.
\end{gathered}
\end{equation}
Such values respect the HIV parameter ranges given 
in Table~\ref{table:nonlin}.
% ---------------------------------
\begin{table}
\begin{center}
\caption{Parameters, their symbols and meaning, and default values used in HIV literature} 
\begin{tabular}{c c c c} \hline 
Parameters & \multicolumn{1}{c}{Meaning} &  Value &  References \\ \hline \hline 
$\lambda$ &  \multicolumn{1}{p{4cm}}{\raggedright source rate of CD4+ T cells} 
& $1$--$10$ cells $\mu l^{-1}$ days$^{-1}$ & \cite{crs}\\ 
$d$ & \multicolumn{1}{p{4cm}}{decay rate of healthy cells} 
& $0.007$--$0.1$ days$^{-1}$ & \cite{crs} \\
$\beta$   & \multicolumn{1}{p{4cm}}{rate at which CD4+ T cells become infected} 
& $0.00025$--$0.5$ $\mu l$ virion$^{-1}$ days$^{-1}$   & \cite{crs}\\
$a$ & \multicolumn{1}{p{4cm}}{death rate of infected CD4+ T cells, not by CTL} 
& $0.2$--$0.3$ days$^{-1}$ & \cite{crs} \\
$\mu$  & \multicolumn{1}{p{4cm}}{clearance rate of virus} 
& $2.06$--$3.81$ days$^{-1}$ & \cite{per}\\
$N$ & \multicolumn{1}{p{4cm}}{number of virions produced by infected CD4+ T-cells} 
& $6.25$--$23599.9$ virion$^{-1}$ & \cite{5a,32}\\
$p$ & \multicolumn{1}{p{4cm}}{clearance rate of infection} 
& $1$--$4.048 \times 10^{-4}$ ml virion days$^{-1}$ & \cite{5a,22} \\
$c$ & \multicolumn{1}{p{4cm}}{activation rate of CTL cells} 
& $0.0051$--$3.912$ days$^{-1}$ & \cite{5a} \\
$h$ & \multicolumn{1}{p{4cm}}{death rate of CTL cells} 
& $0.004$--$8.087$ days$^{-1}$ & \cite{5a} \\
$\tau$ & \multicolumn{1}{p{4cm}}{time delay} & 
$7$--$21$ days & \cite{14,17}\\ \hline 
\end{tabular}
\label{table:nonlin}
\end{center}
\end{table}
% ---------------------------------
In Figure~\ref{fig1}, we use the two following initial
conditions:
\begin{equation}
\label{ic1}
x_0=5, \quad  y_0=1, \quad v_0=1, \quad z_0=2
\end{equation}
and
\begin{equation}
\label{ic2}
x_0=45, \quad y_0=2, \quad v_0=1, \quad z_0=4.
\end{equation}
Moreover, besides parameters \eqref{eq:par:val}, we have chosen 
$N =750$, which means that $N\beta\lambda-d\mu = -0.1125 < 0$. 
According to Theorem~\ref{x}, the disease-free equilibrium 
$E_f = (10,0,0,0)$ is locally asymptotically stable. The plots of
Figure~\ref{fig1} confirm this result of local stability 
for both initial conditions \eqref{ic1} and \eqref{ic2}.
% ---------------------------------
\begin{figure}[!t]
\begin{center}
\subfloat[$x(t)$]{\label{fig1:a}
\includegraphics[scale=0.20]{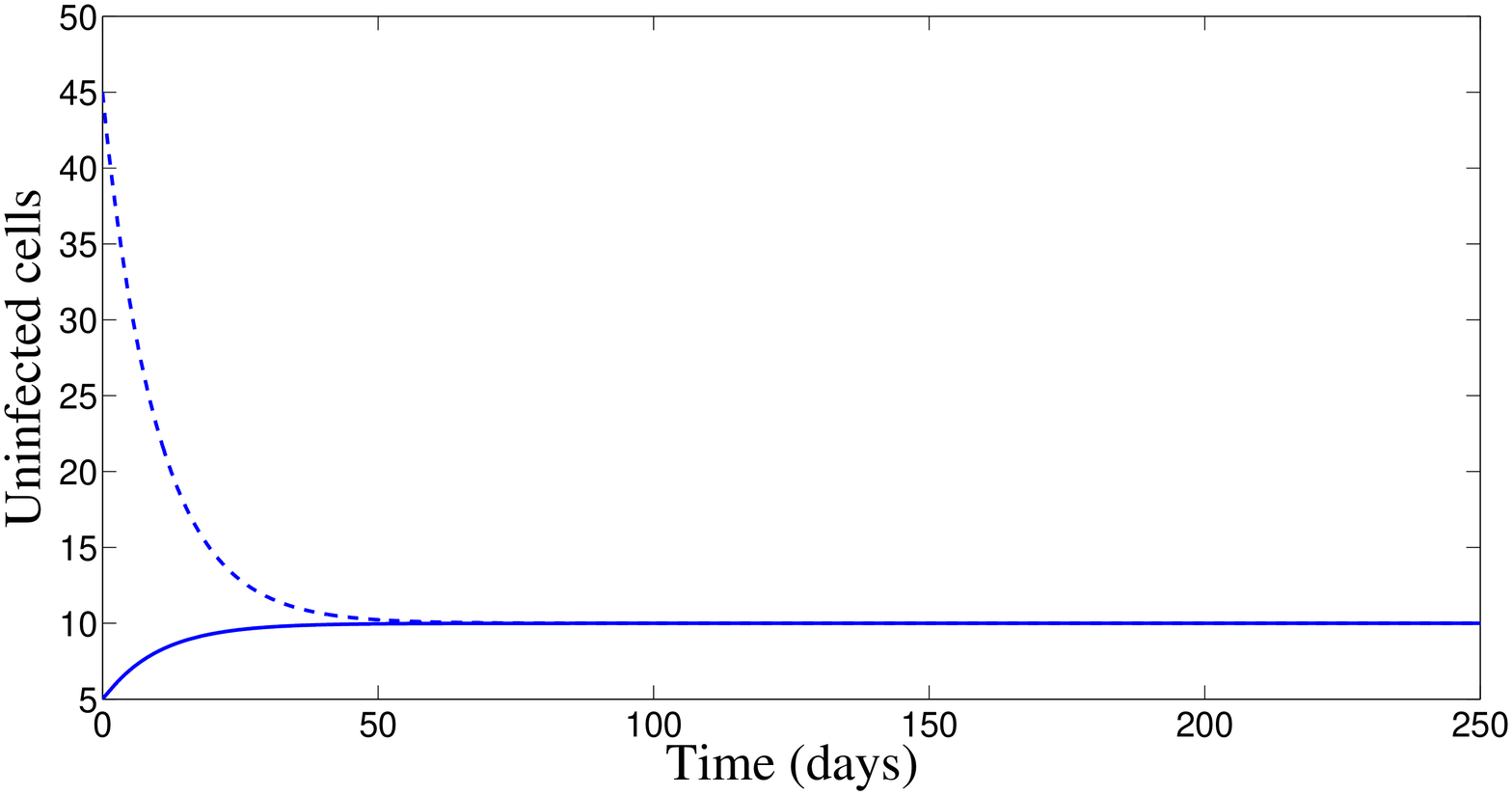}}
\subfloat[$y(t)$]{\label{fig1:b}
\includegraphics[scale=0.20]{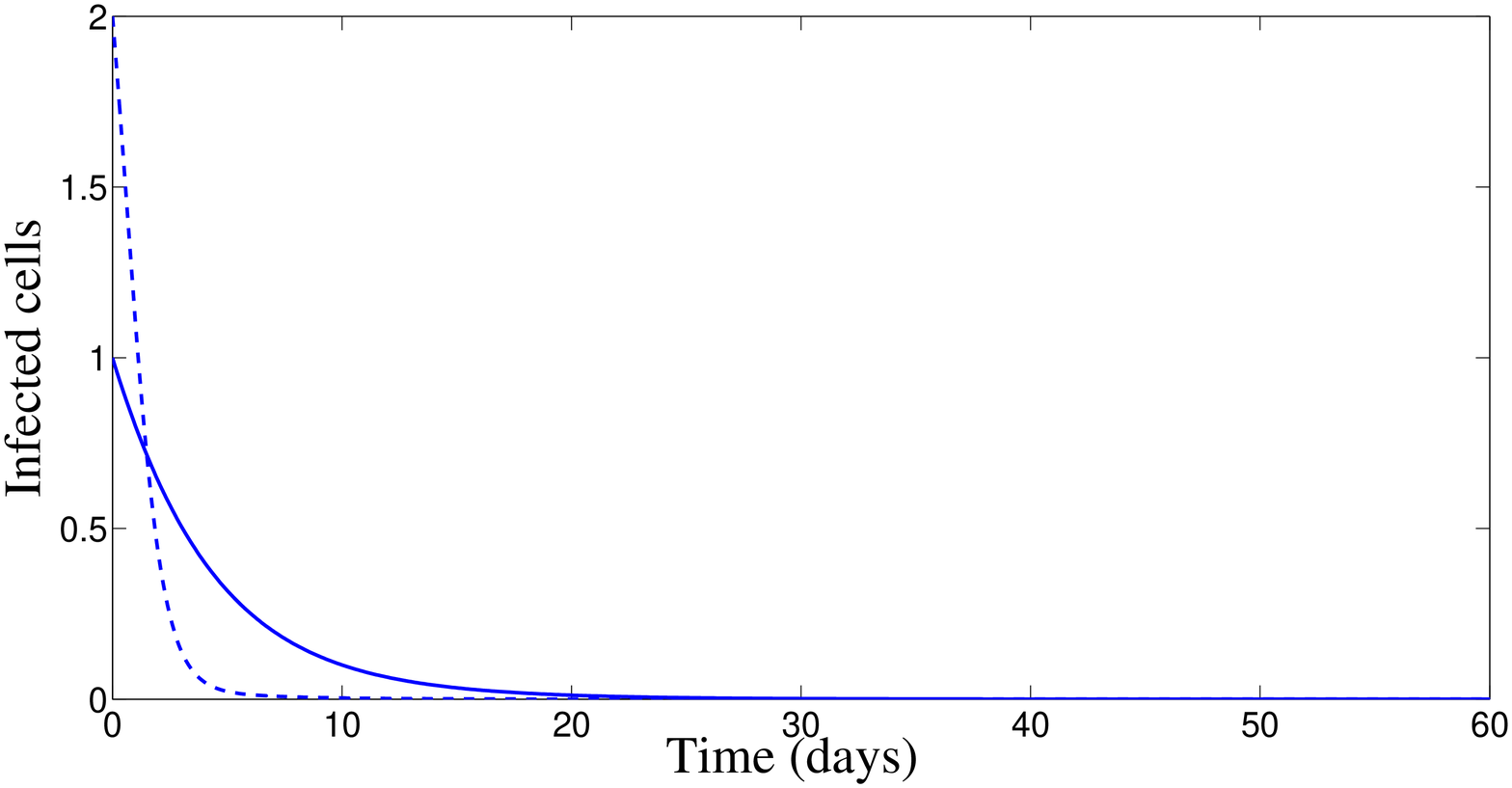}}\\ 
\subfloat[$v(t)$]{\label{fig1:c}
\includegraphics[scale=0.20]{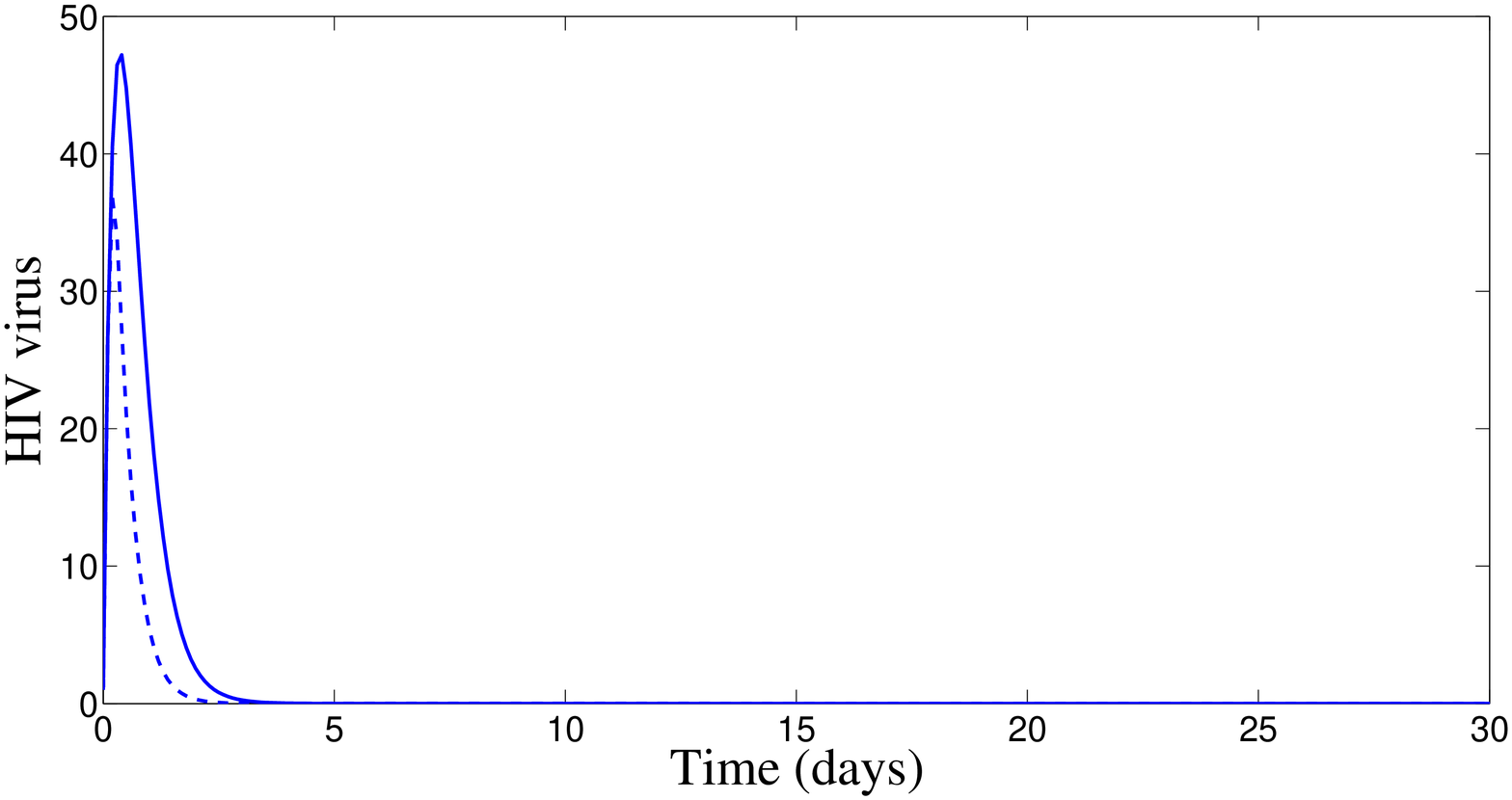}}
\subfloat[$z(t)$]{\label{fig1:d}
\includegraphics[scale=0.20]{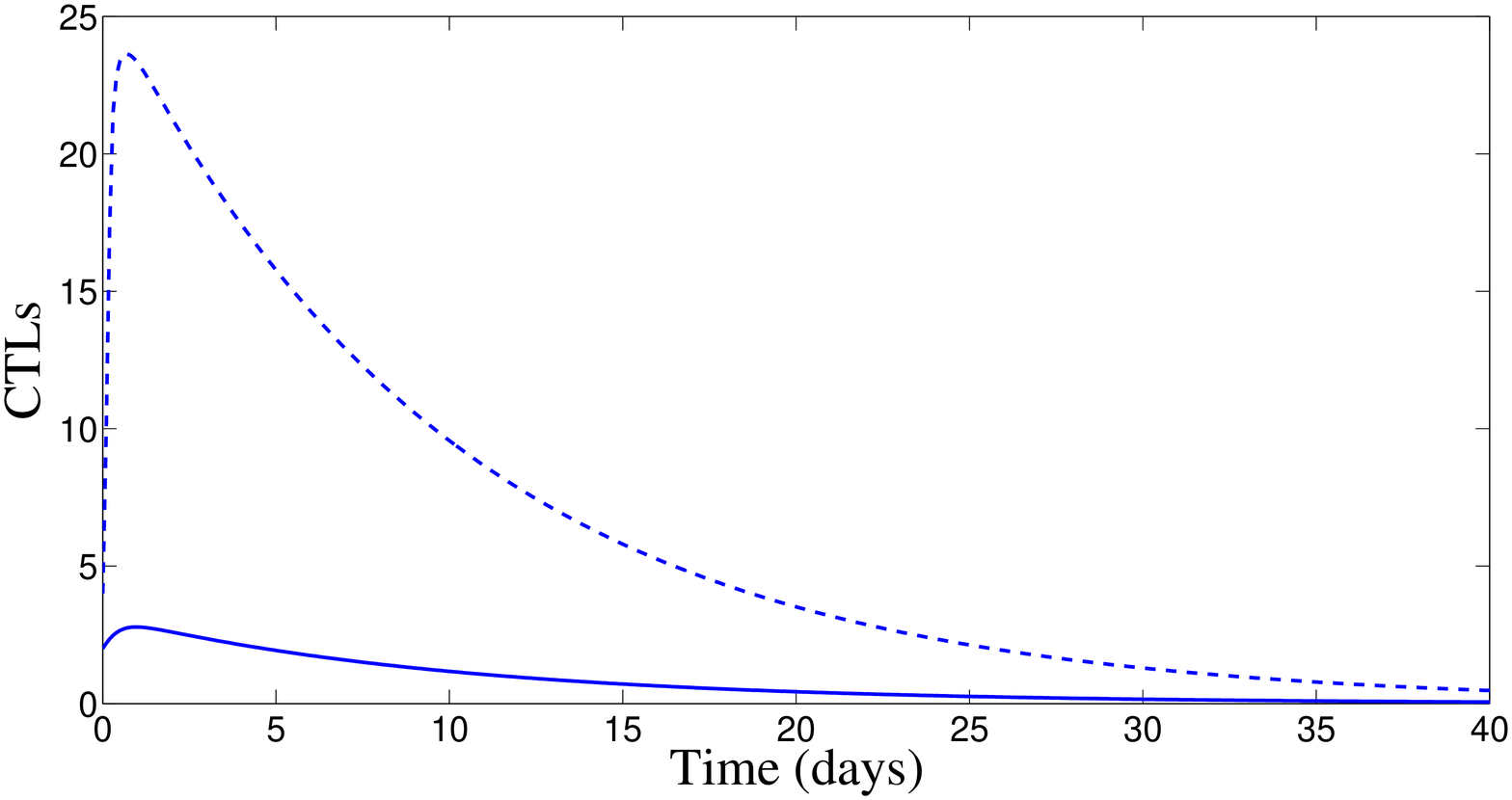}}
\caption{Behavior of infection for
$N=750$ and parameter values \eqref{eq:par:val}.
The continuous curves correspond to initial conditions \eqref{ic1}
and dashed curves to initial conditions \eqref{ic2}.}
\label{fig1}
\end{center} 
\end{figure}
% ---------------------------------
In Figure~\ref{fig2}, we have chosen $N = 1500$, 
which means that $\lambda \mu c-\beta aN h = 7.50 \times 10^{-2} > 0$ 
and $N \beta(\lambda \mu c- \beta h aN)-(\mu^2  c d) = 11.245 \times 10^{-4}> 0$.
According to Theorem~\ref{y}, the endemic equilibrium $E_2$
is locally stable. Figure~\ref{fig2} confirms this result
numerically: we clearly see the convergence to the equilibrium point 
$E_2 = (8.33,0.8,80,8.333)$. However, it is interesting to point out 
that, with control, a significant decrease of the infected cells, 
free viruses, and CTL cells, is observed (see Figure~\ref{fig2}). 
The uninfected cells get maximized. 
It is worth to mention that with control treatment the infection dies 
very fast and the dynamics goes toward the disease-free equilibrium. 
% ---------------------------------
\begin{figure}[!t]
\begin{center}
\subfloat[$x(t)$]{\label{fig2:a}
\includegraphics[scale=0.20]{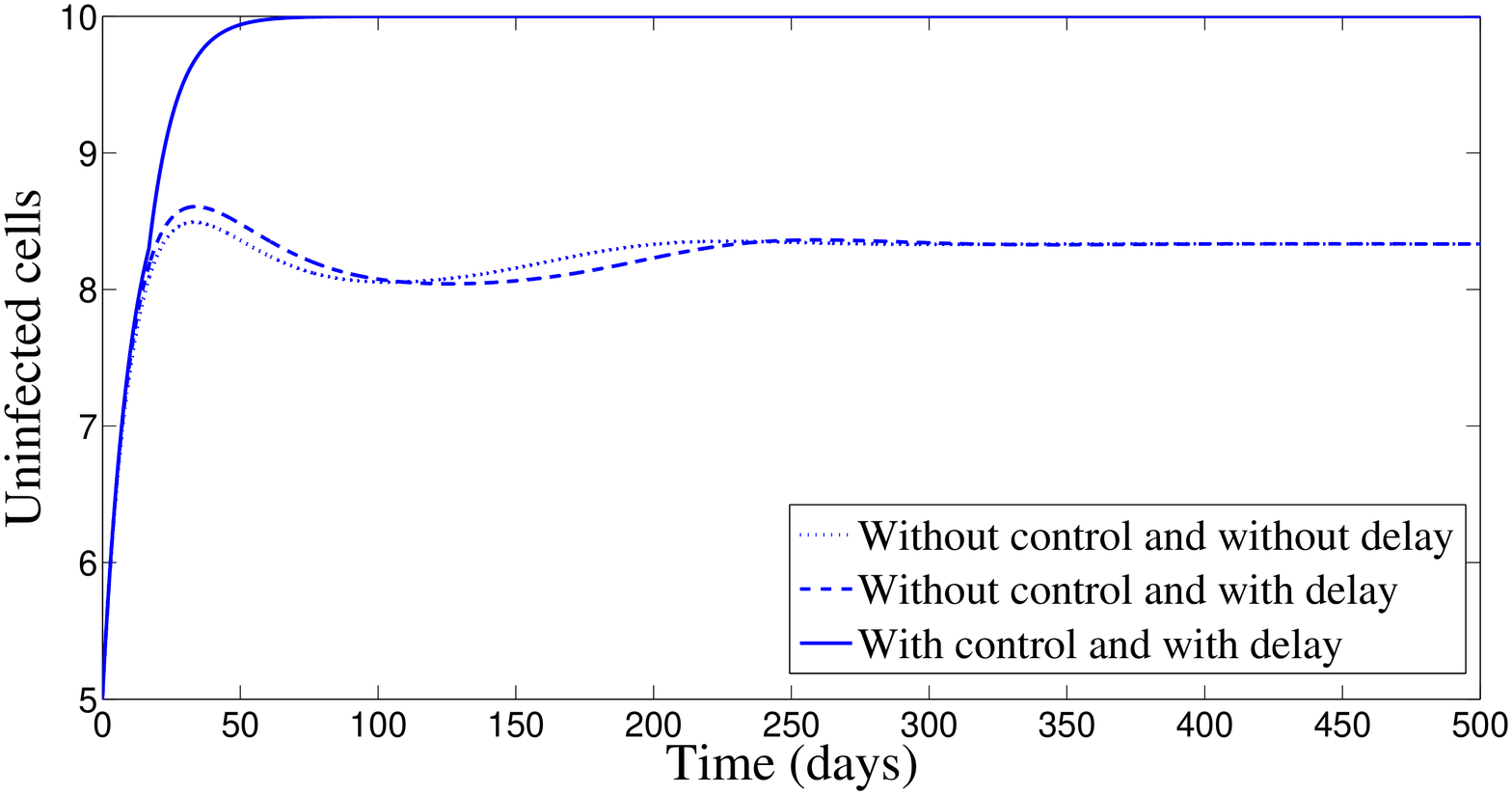}}
\subfloat[$y(t)$]{\label{fig2:b}
\includegraphics[scale=0.20]{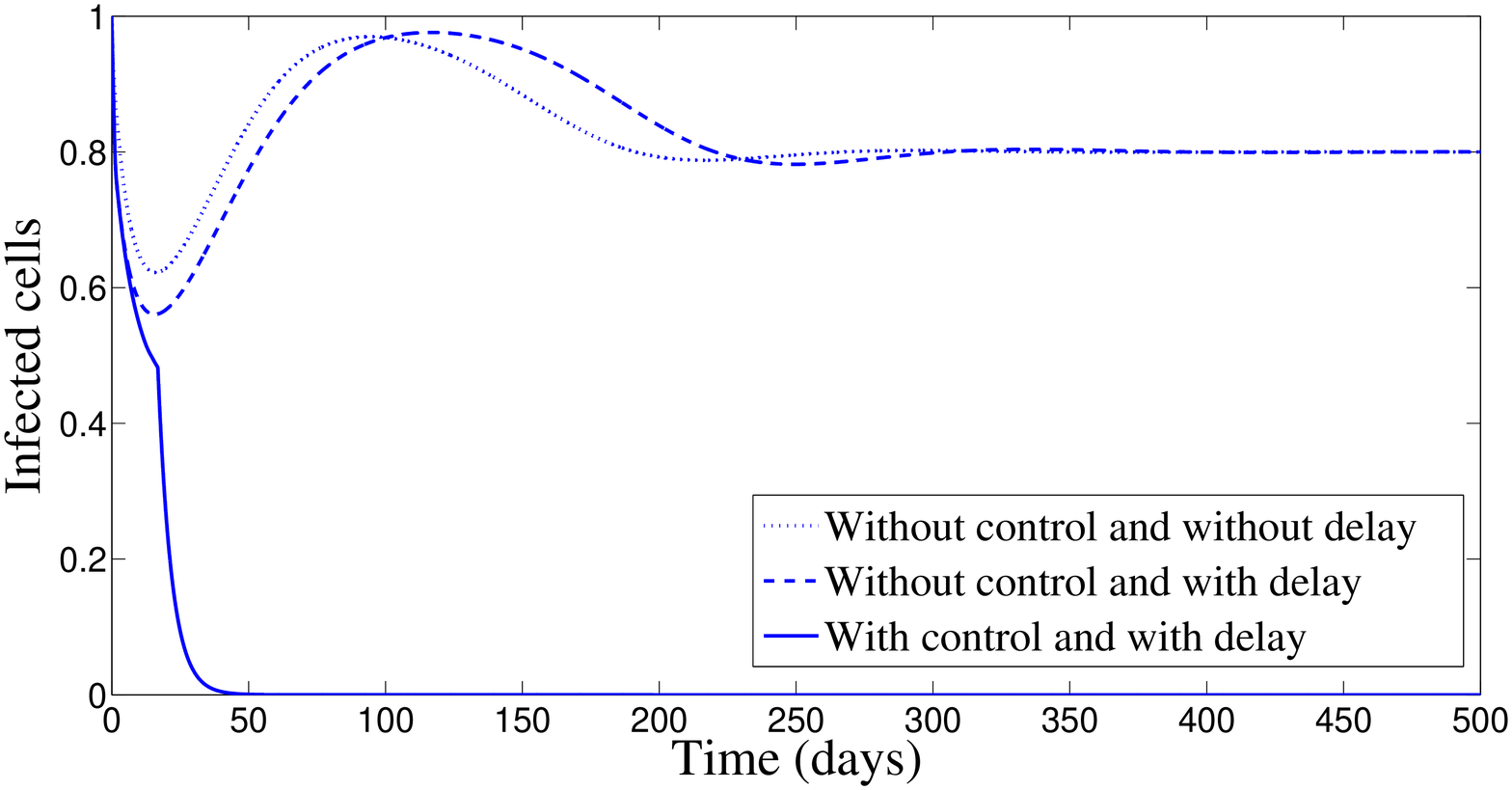}}\\
\subfloat[$v(t)$]{\label{fig2:c}
\includegraphics[scale=0.20]{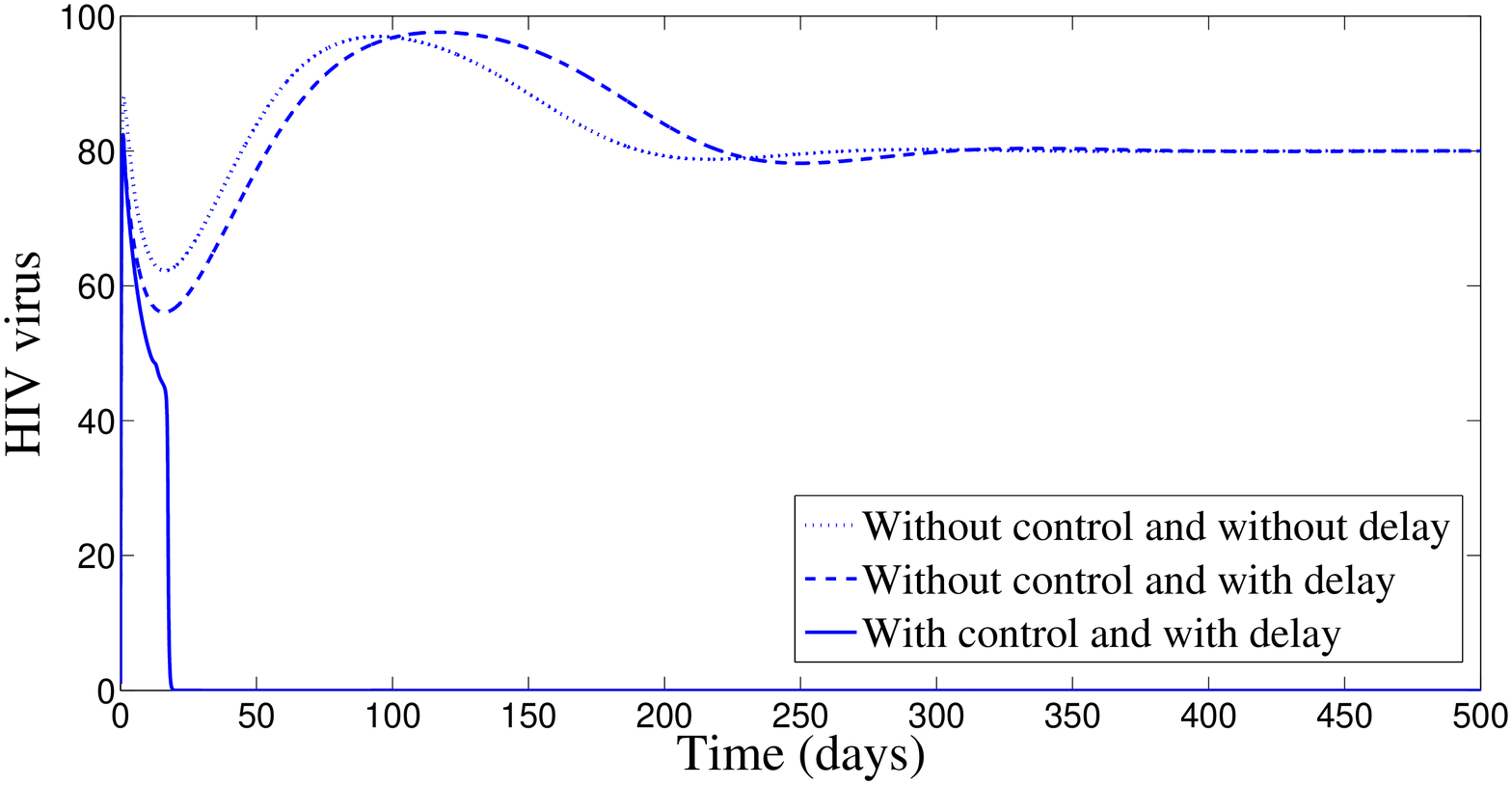}}
\subfloat[$z(t)$]{\label{fig2:d}
\includegraphics[scale=0.20]{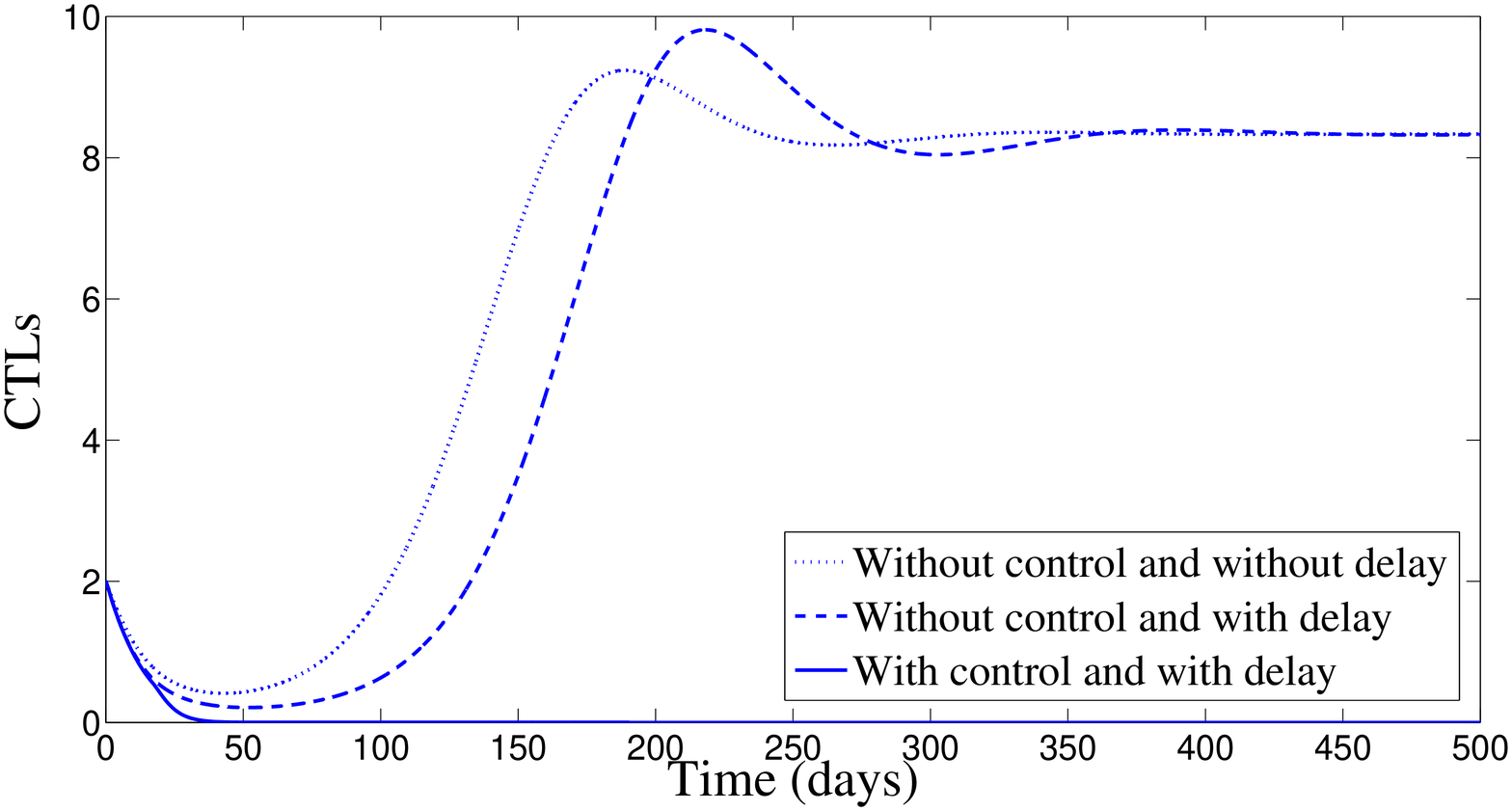}}
\caption{Behavior of infection during 500 days 
for $N=1500$, parameter values \eqref{eq:par:val},
and initial conditions \eqref{ic1}: dotted line 
without control and without delay ($\tau = 0$); 
dashed line without control but with delay $\tau = 10$; 
continuous line for the delayed problem $\tau = 10$ 
under optimal control.}
\label{fig2}
\end{center}
\end{figure}
% ---------------------------------
\begin{figure}[!t]
\begin{center}
\subfloat[$u_{1}^{*}(t)$]{\label{fig3:a}
\includegraphics[scale=0.20]{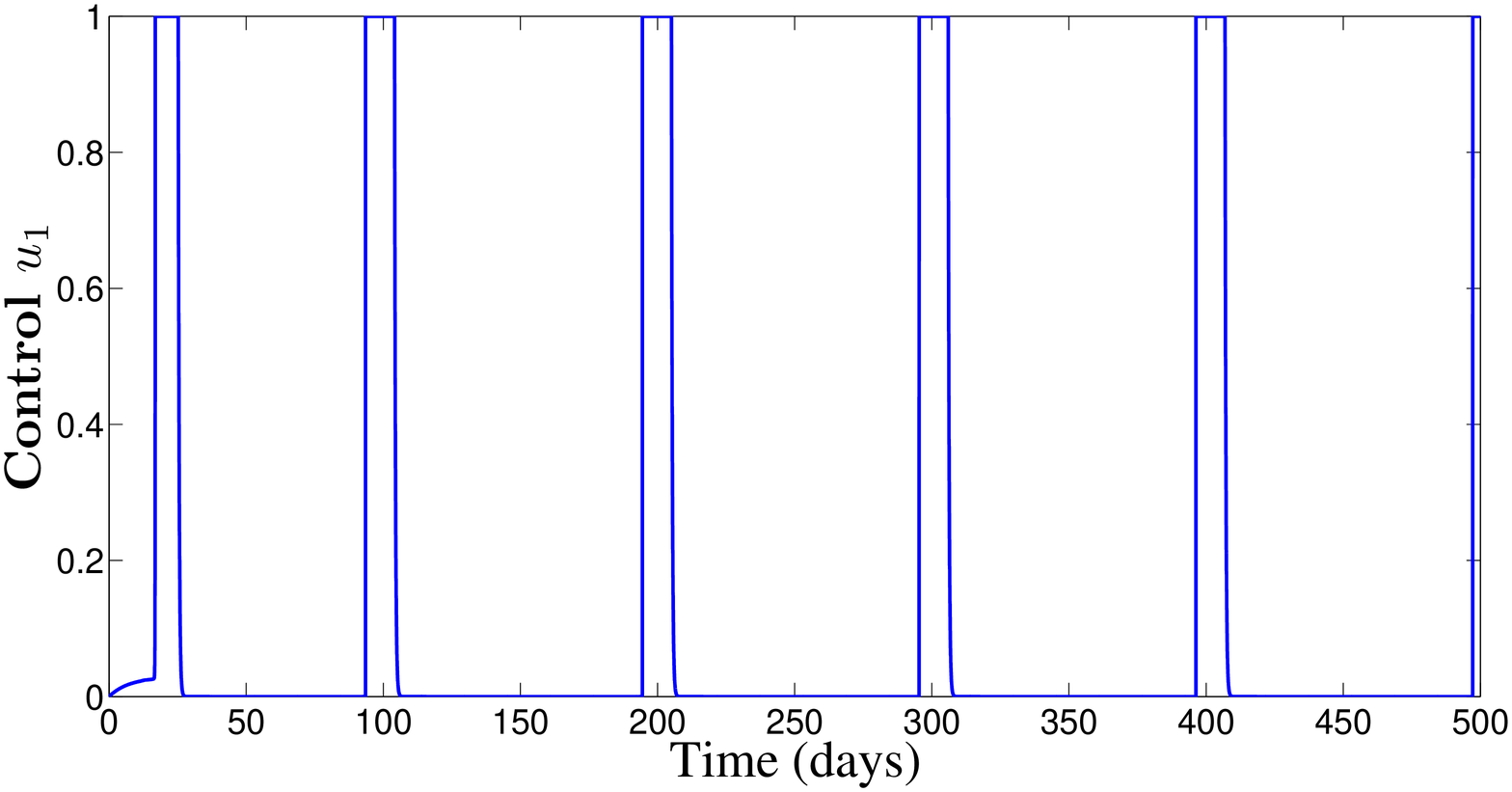}}
\subfloat[$u_{2}^{*}(t)$]{\label{fig3:b}
\includegraphics[scale=0.20]{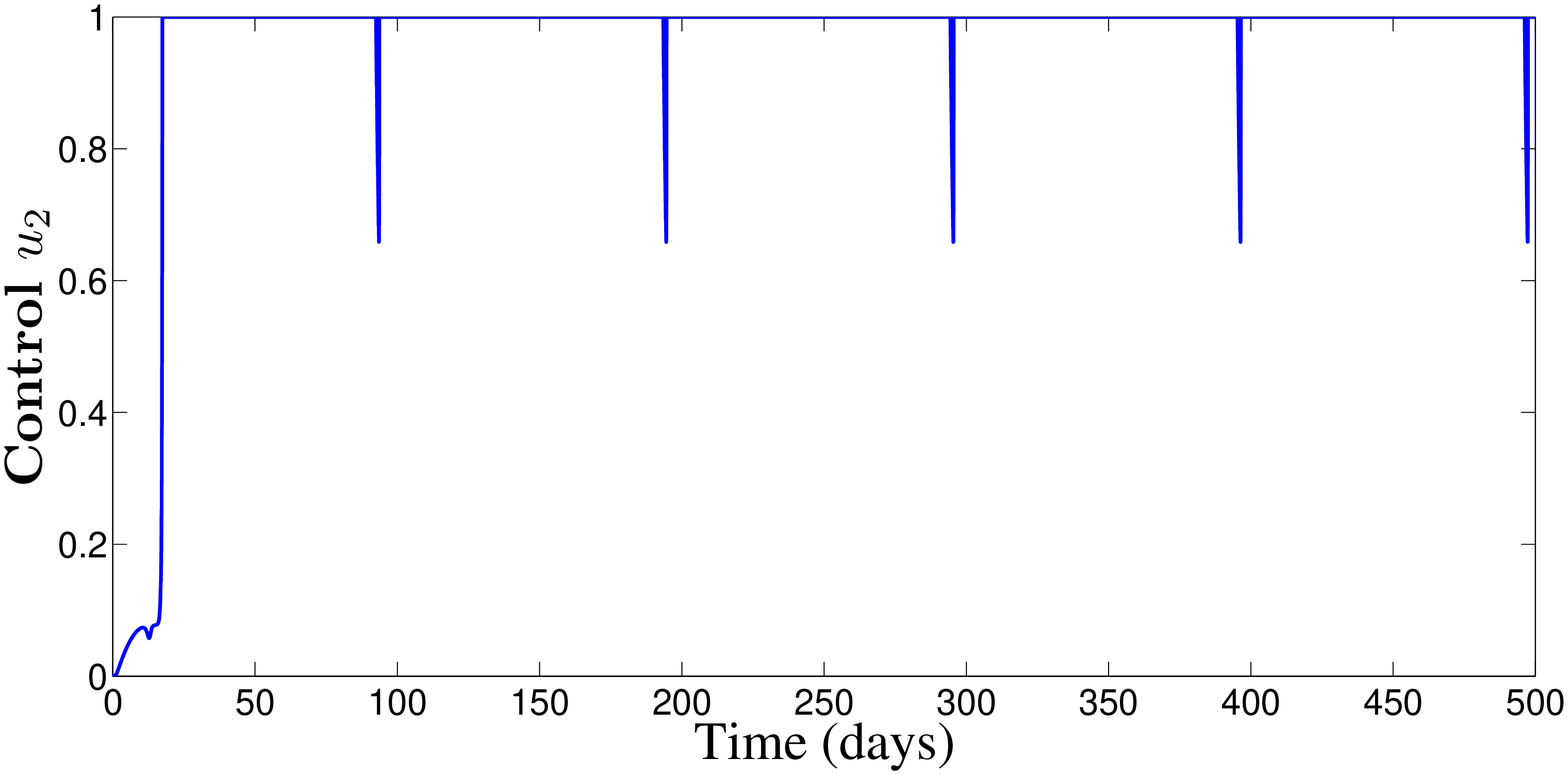}}
\caption{The two extremal controls $u_{1}^{*}$ and $u_{2}^{*}$ 
for the delayed optimal control problem \eqref{sy3} with $N=1500$, 
parameter values \eqref{eq:par:val}, and initial conditions \eqref{ic1},
obtained using Algorithm~\ref{our:alg}.}
\label{fig3}
\end{center}
\end{figure}
% ---------------------------------
The behavior of the two treatments during
time is given in Figure~\ref{fig3}. We can see 
that the first control makes several
switchings from zero to one and vice versa. We can understand
from this that one can manage the first treatment to the patient
periodically in full manner. In this figure, we can also see
that the second control is almost equal to one but with sudden
decreases, for some short periods of time, without vanishing.

% --------------------------------------

\section{Conclusion}
\label{sec:5}

In this work we have studied a delayed HIV viral infection model
with CTL immune response. The considered model includes four
differential equations describing the interaction between the
uninfected cells, infected cells, HIV free viruses and CTL immune
response. An intracellular time delay and two treatments are
incorporated to the suggested model. First, existence,
positivity and boundedness of solutions are established. Next, an
optimization problem is formulated in order to search the better
optimal control pair to maximize the number of uninfected
cells, reduce the infected cells and minimize the viral load.
Two control functions are incorporated in the model, which
represent the efficiency of drug treatment in inhibiting viral
production and preventing new infections. The existence of such
optimal control pair is established and the optimality system is
given and solved numerically, using a forward and backward
difference approximation scheme. It was shown that the obtained
optimal control pair increases considerably the number of CD4+
cells while reducing the number of infected. Moreover, it
was also observed that, under optimal control, the viral load decreases
significantly compared with the model without control, which will
improve the life quality of the patient. It remains an open question
how to prove stability of the endemic equilibrium $E_2$ of model 
\eqref{sys} for an arbitrary intracellular time delay $\tau > 0$.

% --------------------------------------

\begin{acknowledgements}
This research is part of second author's Ph.D., which is carried
out at University of Hassan II Casablanca, Morocco. 
Torres was partially supported by project TOCCATA, 
reference PTDC/EEI-AUT/2933/2014, funded by Project 
3599 -- Promover a Produ\c{c}\~ao Cient\'{\i}fica e Desenvolvimento
Tecnol\'ogico e a Constitui\c{c}\~ao de Redes Tem\'aticas (3599-PPCDT)
and FEDER funds through COMPETE 2020, Programa Operacional
Competitividade e Internacionaliza\c{c}\~ao (POCI), and by Portuguese
funds through Funda\c{c}\~ao para a Ci\^encia e a Tecnologia (FCT)
and CIDMA, within project UID/MAT/04106/2013. The authors are grateful
to the referees for their valuable comments and helpful suggestions.
\end{acknowledgements}

% --------------------------------------

% --------------------------------------

\end{document}